\documentclass[preprint,review,10pt]{amsart}
\usepackage{amsmath,amssymb,amsfonts,xspace}
\newtheorem{theorem}{Theorem}[section]
\newtheorem{lemma}[theorem]{Lemma}

\theoremstyle{definition}
\newtheorem{definition}[theorem]{Definition}
\newtheorem{example}[theorem]{Example}

\newtheorem{corollary}[theorem]{Corollary}
\newtheorem{lem}[theorem]{Lemma}

\theoremstyle{remark}
\newtheorem{remark}[theorem]{Remark}

\numberwithin{equation}{section}

\begin{document}

\title{On $\phi$-$(k,n)$-absorbing $\delta$-primary hyperideals
  }

\author{M. Anbarloei}
\address{Department of Mathematics, Faculty of Sciences,
Imam Khomeini International University, Qazvin, Iran.
}

\email{m.anbarloei@sci.ikiu.ac.ir }


\subjclass[2010]{ 20N20, 16Y99.
}


\keywords{  $\phi$-$(k,n)$-absorbing $\delta$-primary hyperideal,   strongly $\phi$-$(k,n)$-absorbing $\delta$-primary hyperideal, $\phi$-$(k,n)$-$\delta$-primary element}

\begin{abstract}
In this paper, we aim to present the notion of  $\phi$-$(k,n)$-absorbing $\delta$-primary hyperideals in a Krasner $(m,n)$-hyperring $G$. 
\end{abstract}
\maketitle
\section{Introduction}
The concepts of  $\phi$-2-absorbing primary and 2-absorbing $\delta$-primary ideals in a commutative ring as two extensions of 2-absorbing primary ideals were proposed in \cite{bada} and \cite{bmb3}, respectively. Afterwards, Yavuz et al. presented the notion of 2-absorbing $\phi$-$\delta$-primary ideals  which merges these two  concepts \cite{Yavuz}. Let $\mathcal{I}(R)$ be  the set of all ideals of   a commutative ring $R$. Suppose that $\delta$ is an expansion  function from $ \mathcal{I}(R)$ to $\mathcal{I}(R)$ and $\phi$ is a reduction  function from $ \mathcal{I}(R)$ to $\mathcal{I}(R) \cup \{\varnothing\}$. A  proper ideal $I$ 
of $R$ is called a 2-absorbing $\phi$-$\delta$-primary ideal if   $x_1x_2x_3 \in I-\phi(I)$ for $x_1,x_2,x_3 \in R$, then  $x_1x_2 \in I$ or $x_1x_3 \in \delta(I)$ or $x_2x_3 \in \delta(I)$.

Hyperstructures, concretely hypergroups, were innovated  by
Marty in 1934 \cite{s1}.  Some review of the hyperstructures  can be found in  \cite{amer2, s2, s3,  davvaz1, s6, s5,s1, davvaz2, s4, s10, jian}. An algebraic hyperstructure is equipped with at least one  hyperoperation, that is, a multi-valued operation. A class of
algebraic hyperstructures called Krasner $(m,n)$-hyperring was introduced by Mirvakili and Davvaz in \cite{d1}. This notion is a subclass of $(m,n)$-hyperring proposed in \cite{cons} and an extension of Krasner hyperring presented in \cite{Krasner}. 
 Two  of the most important
hyperideals  in a  Krasner $(m, n)$-hyperring, namely   $n$-ary prime and $n$-ary primary hyperideals  were presented   in \cite{sorc1}.  Alghough, these two concepts are different in many
aspects, they have some similar properties. Then, the notion of $\delta$-primary  hyperideals  which covers the concepts of the prime and primary hyperideals  was investigated in \cite{mah3}. The concept of $(k,n)$-absorbing (primary) hyperideals  was defined by Hila et al. \cite{rev2}. 
Afterwards, weakly $(k,n)$-absorbing  ( primary ) hyperideals were introduced by Davvaz et al.  in \cite{weak}. Moreover, the notion of $\phi$-$(k,n)$-absorbing (primary) hyperideals of  Krasner $(m,n)$-hyperrings was studied in \cite{mah6}. 

Therefore, it is  natural to examine
whether it is possible to get a unified approach to investigating these structures. 
In this paper, we define the notion of $\phi$-$(k,n)$-absorbing $\delta$-primary hyperideals in a commutative Krasner $(m,n)$-hyperring $G$ which covers all the  above mentioned concepts. 
Among the main results of this paper,  we show that  $\phi$-$(k,n)$-absorbing hyperideals and $\phi$-$(k,n)$-absorbing $\delta$-primary hyperideals are different concepts in Example \ref{exa2}. Although every  $(k,n)$-absorbing $\delta$-primary hyperideal of $G$ is  a $\phi$-$(k,n)$-absorbing $\delta$-primary hyperideal, Example \ref{e22} verifies that the converse is not true in general. We conclude that 
that the radical of the $\phi$-$(k,n)$-absorbing $\delta$-primary hyperideal $I$ of $G$ is $\phi$-$(k,n)$-absorbing $\delta$-primary when ${\bf r}(\phi(I)) \subseteq \phi({\bf r}(I))$ and ${\bf r}(\delta(I)) \subseteq \delta({\bf r}(I))$ in Theorem \ref{1}. We show that thses two condictions are crucial in Example \ref{ignor}. We give some conditions under which the intersection of a family of $\phi$-$(k,n)$-absorbing $\delta$-primary hyperideals is a $\phi$-$(k,n)$-absorbing $\delta$-primary hyperideal in Theorem \ref{kharazi}. We obtain that if $I$ is a strongly $\phi$-$(k,n)$-absorbing $\delta$-primary hyperideal of $G$  that is not  $(k,n)$-absorbing $\delta$-primary, then $g$-product of $kn-k+1$ copies of  $I$ is a subset of $\phi(I)$ in Corollary \ref{weak5} (i). Example \ref{mansooreh} verifies that a proper hyperideal $I$ of $G$ need not to be  $\phi$-$(k,n)$-absorbing $\delta$-primary when  $g(I^{(kn-k+1)}) \subseteq \phi(I)$. Besides, we analyze the images and inverse images of $\phi$-$(k,n)$-absorbing $\delta$-primary hyperideals
under homomorphism. Finally, we provide some characterizations of this notion on cartesian product of Krasner $(m, n)$-hyperrings.



\section{Krasner $(m,n)$-hyperrings}
In this section, we summarize some basic definitions  related to Krasner $(m,n)$-hyperrings. \\
Assume that $G \neq \varnothing$  and $P^*(G)$ is the
set of all the non-empty subsets of $G$.  An $n$-ary hyperoperation on $G$ is a map  $f : G^n \longrightarrow P^*(G)$
 and the couple  $(G, f)$ is  an $n$-ary hypergroupoid. The abbreviated  notation $t_i^j$   denote  the sequence $t_i, t_{i+1},\ldots, t_j$ if $j \geq i$ and it is the empty symbol if $j < i$. Moreover, if $s_{i+1} =\cdots = s_j = s$, we write $f(t^i_1, s^j_{i+1}, r^n_{j+1})=f(t^i_1, s^{(j-i)}, r^n_{j+1})$.
 We define
$f(G^n_1) = f(G_1,\ldots, G_n) = \bigcup \{f(t^n_1) \ \vert \  t_i \in  G_i, 1 \leq i \leq  n \}$ where $G_1^n$   are non-empty subsets of $G$. 
Let $f$ be an $n$-ary hyperoperation and $v = l(n- 1) + 1$. Then, $v$-ary hyperoperation $f_{(l)}$ is defined by
$f_{(l)}(t_1^{l(n-1)+1}) = f\bigg(f(\ldots, f(f(t^n _1), t_{n+1}^{2n -1}),\ldots), t_{(l-1)(n-1)+1}^{l(n-1)+1}\bigg).$ Let $\mathbb{S}_n$ be the group of all permutations of $\{1, \ldots, n\}$. If $f(t_1^n) = f(t_{\sigma(1)}^{\sigma(n)})$ for all $ \sigma \in \mathbb{S}_n$ and for all $t_1^n \in G$, then $(G,f)$ is commutative.

\begin{definition}
\cite{d1} An algebraic hyperstructure $(G, f, g)$, or simply $G$, is   a Krasner $(m, n)$-hyperring if  
\begin{itemize}
\item[\rm(1)]~ $(G, f)$ is a canonical $m$-ary hypergroup;
\item[\rm(2)]~ $(G, g)$ is an $n$-ary semigroup having $0$ as $g(t_1^{i-1},0,t_{i+1}^n) = 0$, for each $t^n_ 1 \in G$;
\item[\rm(3)]~ $g(t^{i-1}_1, f(s^m _1 ), t^n _{i+1}) = f(g(t^{i-1}_1, s_1, t^n_{ i+1}),\ldots, g(t^{i-1}_1, s_m, t^n_{ i+1}))$ for all $i \in \{1,\ldots,n\}$ and $t^{i-1}_1 , t^n_{ i+1}, s^m_ 1 \in G$. 
\end{itemize}
\end{definition}
Throughout this paper,  $G$ is a commutative Krasner $(m,n)$-hyperring with the scalar identity $1$.

Let $\varnothing \neq H \subseteq G$. Then, we say that $H$ is a subhyperring of $G$ if $(H, f, g)$ is a Krasner $(m, n)$-hyperring. Let $\varnothing \neq I \subseteq G$.  $I$  is a hyperideal  of $G$ if $(I, f)$ is an $m$-ary subhypergroup
of $(G, f)$ and $g(t^{i-1}_1, I, t_{i+1}^n) \subseteq I$, for each $t^n _1 \in  G$ and  $1 \leq i \leq n$. By  $\mathcal{L}(G)$, we mean all hyperideals of $G$.

\begin{definition}
 \cite{sorc1} A proper hyperideal $I$ of  $G$ is    prime  if for hyperideals $I_1^n$ of $G$, $g(I_1^ n) \subseteq I$ implies $I_i \subseteq I$ for some $1 \leq i \leq n$.
\end{definition}
 
By Lemma 4.5 in \cite{sorc1}, it is proved that  a proper hyperideal $I$ of $G$ is  prime  if for each $t^n_ 1 \in G$, $g(t^n_ 1) \in I$ implies  $t_i \in I$ for some $1 \leq i \leq n$. 

\begin{definition} \cite{sorc1}   Let $I$ be a hyperideal of $G$. ${\bf r}(I)$, the radical of $I$,  is the intersection of  all  prime hyperideals of $G$ containing $I$.  We get ${\bf r}(I)=G$ if the set of all prime hyperideals   containing $I$ is empty.
\end{definition}
 If $t \in {\bf r^{(m,n)}}(I)$, then 
 there exists $v \in \mathbb {N}$ with $g(t^ {(v)} , 1_G^{(n-v)} ) \in I$ for $v \leq n$, or $g_{(l)} (t^ {(v)} ) \in I$ for $v = l(n-1) + 1$ (See, Theorem 4.23 in \cite{sorc1}).
\begin{definition}
\cite{sorc1} A proper hyperideal  $I$ of $G$ is     primary  if $g(t^n _1) \in I$ and $t_i \notin I$ implies $g(t_1^{i-1}, 1,t _{ i+1}^n) \in {\bf r}(I)$ for some $1 \leq i \leq n$.
\end{definition}
Let $I$ be a primary hyperideal of $G$. Theorem 4.28 in \cite{sorc1} proves that the radical of  $I$ is prime.
 \begin{definition} 
\cite{rev2} Let $I$ be a proper hyperideal of  $G$. We say that $I$ is an 
\begin{itemize}
\item[\rm(1)]~ $(k,n)$-absorbing hyperideal  if for  $t_1^{kn-k+1} \in G$, $g(t_1^{kn-k+1}) \in I$ implies that there exist $(k-1)n-k+2$ of the $t_i^,$s whose $g$-product is in $I$. 
\item[\rm(2)]~ $(k,n)$-absorbing primary hyperideal  if for  $t_1^{kn-k+1} \in G$, $g(t_1^{kn-k+1}) \in I$ implies that $g(t_1^{(k-1)n-k+2}) \in I$ or a $g$-product of $(k-1)n-k+2$ of the $t_i^,$s, except $g(t_1^{(k-1)n-k+2})$, is in ${\bf r}(I)$.
\end{itemize}
\end{definition}


\section{$\phi$-$(k,n)$-absorbing $\delta$-primary hyperideals}
A function $\phi$ from $\mathcal{L}(G)$ to $\mathcal{L}(G) \cup \{\varnothing\}$ refers to a reduction function if  $\phi(I_1) \subseteq I_1$ and $I_1 \subseteq I_2$ implies $\phi(I_1) \subseteq \phi(I_2)$ for each $I_1,I_2 \in \mathcal{L}(G)$. For example, the functions $\phi_{\varnothing}, \phi_0, \phi_1,  \phi_n: \mathcal{L}(G) \longrightarrow \mathcal{L}(G) \cup \{\varnothing\}$, defined by $\phi_{\varnothing}(I)=\varnothing, \phi_0(I)=0,\phi_1(I)=I$ and $\phi_n(I)=g(I^{(n)})$ for $n \geq 2$, are reduction functions. Moreover, a function $\delta$ from $\mathcal{L}(G)$ to $\mathcal{L}(G)$ is called  an expansion  function if  $I_1  \subseteq \delta(I_1)$ and $I_1 \subseteq I_2$ implies $\delta(I_1) \subseteq \delta(I_2)$ for each $I_1,I_2 \in \mathcal{L}(G)$. For instance, the functions $\delta_0, \delta_1, \delta_G,\delta_{\mathcal{L}(G)}: \mathcal{L}(G) \longrightarrow \mathcal{L}(G)$, defined by $\delta_0(I)=I, \delta_1(I)={\bf r}(I),\delta_G(I)=G$ and $\delta_{\mathcal{L}(G) }(I)=\cap_{I \subseteq J \in \mathcal{L}(G) }J$, are expansion  functions. In this section, we propose a unified framework for existing structures such as $\phi$-$(k,n)$-absorbing  hyperideals and $(k,n)$-absorbing $\delta$-primary hyperideals in a Krasner $(m,n)$-hyperring.   Throughout this section,  $\phi$ and $\delta$ are reduction and expansion fuctions, respectively.
\begin{definition}
Let $I$  be a proper hyperideal of $G$ and $k \in \mathbb{Z}^+$. $I$ refers to a  $\phi$-$(k,n)$-absorbing $\delta$-primary hyperideal provided that for $t_1^{kn-k+1} \in G$,  $g(t_1^{kn-k+1}) \in I-\phi(I)$ implies  that  $g(t_1^{(k-1)n-k+2}) \in I$ or a $g$-product  of $(k-1)n-k+2$ of  $t_i^,$s, except $g(t_1^{(k-1)n-k+2}),$ is in $\delta(I)$.
\end{definition} 
\begin{example} \label{hanieh} 
The construction $G=\mathbb{Z}_{12}/\mathbb{Z}_{12}^*$ is a Krasner hyperring where $\mathbb{Z}_{12}^*=\{1,5,7,11\}$ is a multiplicative
group of units of $\mathbb{Z}_{12}$ by Example 3.2 in \cite{rev2}. In the hyperring,   $I=\{0\mathbb{Z}_{12}^*,2\mathbb{Z}_{12}^*,4\mathbb{Z}_{12}^*, 6\mathbb{Z}_{12}^*\}$ is a $\phi_0$-$(2,2)$-absorbing $\delta_1$-primary hyperideal of $G$. 
\end{example}
Recall from \cite{mah6} that a proper hyperideal $I$ of $G$ is called a $\phi$-$(k,n)$-absorbing hyperideal  if for  $t_1^{kn-k+1} \in G$, $g(t_1^{kn-k+1}) \in I-\phi(I)$ implies that there exist $(k-1)n-k+2$ of the $t_i^,$s whose $g$-product is in $I$. 
\begin{remark} \label{e1}
Every $\phi$-$(k,n)$-absorbing hyperideal of a Krasner $(m,n)$-hyperring is  a $\phi$-$(k,n)$-absorbing $\delta$-primary hyperideal.
\end{remark} 
The converse of Remark \ref{e1} need not to be hold  as it is shown in the following example. 
\begin{example} \label{exa2}
Consider the Krasner $(2,2)$-hyperring  $(G=[0,1],\oplus,\cdot)$ where the  $2$-ary hyperoperation  $``\oplus"$ defined by $x \oplus y=\{max\{x, y\}\}$ if $x \neq y$ or $[0,x]$ if otherwise and  $``\cdot"$
  is the usual multiplication on real numbers. Suppose  that   $\phi$ is  a function from $\mathcal{L}(G)$ to $\mathcal{L}(G) \cup \{\varnothing\}$ defined $\phi(Q)=\cap_{i=1}^{\infty} g(Q^{(i)})$ for  $Q \in \mathcal{L}(G)$. In this hyperring,  the hyperideal $I=[0,0.5]$ is a  $\phi$-$(2,2)$-absorbing $\delta_1$-primary hyperideal but it is not $\phi$-$(2,2)$-absorbing since $ 0.7 \cdot 0.8^2 \in I-\phi(I)$ and $ 0.7 \cdot 0.8, 0.8^2 \cdot 0.7 \notin I$.
\end{example}
 A proper hyperideal $I$ of $G$ refers to a $(k,n)$-absorbing $\delta$-primary hyperideal  if for  $t_1^{kn-k+1} \in G$, $g(t_1^{kn-k+1}) \in I$ implies $g(t_1^{(k-1)n-k+2}) \in I$ or a $g$-product of $(k-1)n-k+2$ of the $t_i^,$s, except $g(t_1^{(k-1)n-k+2})$, is in $\delta(I)$ \cite{mah7}.
\begin{remark} \label{e2}
Every $(k,n)$-absorbing $\delta$-primary hyperideal of a Krasner $(m,n)$-hyperring is  a $\phi$-$(k,n)$-absorbing $\delta$-primary hyperideal.
\end{remark} 
Note that the converse of Remark \ref{e2} may not be generally
true. 
\begin{example} \label{e22}
Let $G=\{0,1,2,3\}$. By Example 4.8 in  \cite{sorc1}, $G$ is a Krasner $(2,4)$-hyperring with the 2-ary hyperoperation  $+$ and 4-ary operation $g$ deﬁned by

\[\begin{tabular}{|c|c|c|c|c|} 
\hline $+$ & $0$ & $1$ & $2$ & $3$
\\ \hline  $0$ & $0$ & $1$ & $2$ & $3$
\\ \hline $1$ & $1$ & $\{0,1\}$ &  $3$ & $\{2,3\}$
\\ \hline $2$ & $2$ & $3$ & $0$ & $1$ 
\\ \hline $3$ & $3$ & $\{2,3\}$ &  $1$ & $\{0,1\}$
\\ \hline
\end{tabular}\]

and

\[ 
g(t_1^4)=
\begin{cases}
2, & \text{if $t_1^4 \in \{2,3\}$}\\
0, & \text{otherwise.}
\end{cases} \]
Although $I=\{0\}$ is a $\phi_0$-$(1,4)$-absorbing $\delta_0$-primary hyperideal of $G$, Example 2.2 in \cite{weak} verifies that it is not a $(1,4)$-absorbing $\delta_0$-primary hyperideal of $G$.
\end{example}
\begin{theorem}
If $I$ is a $\phi$-$(k,n)$-absorbing $\delta$-primary hyperideal of $G$, then $I$ is a $\phi$-$(u,n)$-absorbing $\delta$-primary hyperideal for all $u \geq k$.
\end{theorem}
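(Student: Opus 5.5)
Induction on $u$: it suffices to show that a $\phi$-$(k,n)$-absorbing $\delta$-primary hyperideal is $\phi$-$(k+1,n)$-absorbing $\delta$-primary. Fix $t_1^{(k+1)n-k} \in G$ with $g(t_1^{(k+1)n-k}) \in I-\phi(I)$. Note that $(k+1)n-(k+1)+1 = (kn-k+1)+(n-1)$, so the $g$-product of $(k+1)n-k$ elements contains $n-1$ more factors than one of length $kn-k+1$.

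The plan is to merge the last $n$ elements into one. Using associativity and commutativity of $g$, write
\[
g(t_1^{(k+1)n-k}) = g\bigl(t_1^{kn-k}, g(t_{kn-k+1}^{(k+1)n-k})\bigr),
\]
and set $s = g(t_{kn-k+1}^{(k+1)n-k})$, which is a single element because $g$ is an $n$-ary operation. Then $g(t_1^{kn-k}, s) \in I-\phi(I)$ is a product of $kn-k+1$ elements, and the $(k,n)$ hypothesis gives two cases.
\begin{itemize}
\item[(a)] $g(t_1^{(k-1)n-k+2}) \in I$. Since $(k-1)n-k+2 \le kn-k$, this product uses only original $t_i$'s. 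It is a product of fewer elements than the $kn-k+1$ required at level $k+1$. Pad it with $n-1$ further factors $t_{(k-1)n-k+3}^{kn-k+1}$, all of which are available. Because $I$ is a hyperideal, $g(t_1^{kn-k+1}) \in I$, which is the first alternative in the definition at level $k+1$.
\item[(b)] Some $g$-product of $(k-1)n-k+2$ of the elements $t_1,\dots,t_{kn-k},s$, other than $g(t_1^{(k-1)n-k+2})$, lies in $\delta(I)$. If $s$ is among the factors, expand it: the result is a $g$-product of $(k-1)n-k+2-1+n = kn-k+1$ original $t_i$'s lying in $\delta(I)$. If $s$ is not among them, the product involves $(k-1)n-k+2$ of the $t_i$ with $i\le kn-k$; multiply by $n-1$ further $t_j$'s, chosen e.g. from the last block, and use that $\delta(I)$ is a hyperideal. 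Either way we obtain a $g$-product of $kn-k+1$ of the $t_i$'s in $\delta(I)$.
\end{itemize}

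The delicate point is ensuring in (b) that this product is not the excluded one, $g(t_1^{kn-k+1})$; in that situation we would instead need it to lie in $I$. If $s$ occurs, the product contains $t_{kn-k+2},\dots$, so it differs from $g(t_1^{kn-k+1})$ since it includes indices beyond $kn-k+1$. If $s$ does not occur, we may always pad with factors from $t_{kn-k+2}^{(k+1)n-k}$, which has exactly $n-1$ elements and is disjoint from the chosen set. This again yields a product different from $g(t_1^{kn-k+1})$.

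Hence $I$ is $\phi$-$(k+1,n)$-absorbing $\delta$-primary, and induction gives the claim for all $u\ge k$. I expect the only real obstacle to be this index bookkeeping, making sure the padded product in case (b) is never the excluded initial segment.
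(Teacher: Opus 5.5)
Your proof is correct and takes the same route as the paper. You reduce to $u=k+1$, collapse a block of $n$ consecutive factors into one element so the level-$k$ hypothesis applies to a $g$-product of $kn-k+1$ elements, and then expand or pad using that $I$ and $\delta(I)$ are hyperideals. The only differences are that the paper collapses a leading block rather than the trailing one (so its first alternative expands directly to $g(t_1^{kn-k+1})\in I$, with no padding), and that your check that the $\delta(I)$-product is never the excluded $g(t_1^{kn-k+1})$ is more careful than the paper's.
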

\begin{proof}
It suffices to show  every $\phi$-$(k,n)$-absorbing $\delta$-primary hyperideal of $G$ is a $\phi$-$(k+1,n)$-absorbing $\delta$-primary hyperideal of $G$. Let $I$ be a $\phi$-$(k,n)$-absorbing $\delta$-primary hyperideal of $G$. Assume that $g(g(t_1^{n+2}),t_{n+3}^{(k+1)n-(k+1)+1}) \in I-\phi(I)$ for $t_1^{(k+1)n-(k+1)+1} \in G$. Put $g(t_1^{n+2})=s_1$. Then  $g(s_1,\ldots,t_{(k+1)n-(k+1)+1}) \in I$ or a $g$-product of $kn-k+1$ of the $r_i^,$s, except $g(s_1,\ldots, t_{(k+1)n-(k+1)+1})$, is in $\delta(I)$ as $I$ is a $\phi$-$(k,n)$-absorbing $\delta$-primary hyperideal of $G$. Since $\delta(I)$ is a hyperideal of $G$ and $t_1^{n+2} \in G$, we have $g(t_i,t_{n+3},\ldots,t_{(k+1)n-(k+1)+1}) \in \delta(I)$ for $i \in \{1,\ldots,n+2\}$
which implies  $I$ is $\phi$-$(k+1,n)$-absorbing $\delta$-primary hyperideal of $G$.
\end{proof}
The folowing  theorem provides conditions under which a $\phi$-$(k,n)$-absorbing $\delta$-primary hyperideal is $(k,n)$-absorbing $\delta$-primary.
\begin{theorem}
Let  $I$ be a proper hyperideal of $G$  such that $\phi(I)$ is a $(k,n)$-absorbing $\delta$-primay hyperideal of $G$. If $I$ is a $\phi$-$(k,n)$-absorbing $\delta$-primary hyperideal of $G$, then $I$ is a $(k,n)$-absorbing $\delta$-primary hyperideal of $G$.
\end{theorem}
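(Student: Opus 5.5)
The argument splits on whether the product lies in $\phi(I)$. Take $t_1^{kn-k+1} \in G$ with $g(t_1^{kn-k+1}) \in I$. We must show that $g(t_1^{(k-1)n-k+2}) \in I$, or that some $g$-product of $(k-1)n-k+2$ of the $t_i$'s, other than $g(t_1^{(k-1)n-k+2})$, lies in $\delta(I)$.

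\emph{Case 1: $g(t_1^{kn-k+1}) \notin \phi(I)$.} Then $g(t_1^{kn-k+1}) \in I-\phi(I)$. Since $I$ is $\phi$-$(k,n)$-absorbing $\delta$-primary, the definition gives the required conclusion at once.

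\emph{Case 2: $g(t_1^{kn-k+1}) \in \phi(I)$.} Here we use the hypothesis on $\phi(I)$. It is a $(k,n)$-absorbing $\delta$-primary hyperideal, so it is in particular a proper hyperideal and $\phi(I) \neq \varnothing$. Applying its defining property to the same tuple $t_1^{kn-k+1}$, we get one of two outcomes.
\begin{itemize}
\item[\rm(a)] $g(t_1^{(k-1)n-k+2}) \in \phi(I)$. Since $\phi$ is a reduction function, $\phi(I) \subseteq I$, so this product lies in $I$.
\item[\rm(b)] Some $g$-product of $(k-1)n-k+2$ of the $t_i$'s, other than $g(t_1^{(k-1)n-k+2})$, lies in $\delta(\phi(I))$. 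Since $\phi(I) \subseteq I$ and $\delta$ is an expansion function, monotonicity gives $\delta(\phi(I)) \subseteq \delta(I)$, so this product lies in $\delta(I)$.
\end{itemize}
In both sub-cases the defining condition for $I$ holds.

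Combining the two cases, $I$ is a $(k,n)$-absorbing $\delta$-primary hyperideal of $G$. The only step needing care is Case 2, where we transfer the conclusion from $\phi(I)$ to $I$. This uses both monotonicity properties: $\phi(I) \subseteq I$ for the first alternative, and monotonicity of $\delta$ for the second. We also need that $\phi(I)$ is a genuine hyperideal, which is implicit in calling it $(k,n)$-absorbing $\delta$-primary, so that $\delta(\phi(I))$ is defined.
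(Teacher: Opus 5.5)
Your proof is correct and follows the paper's own argument. Both split on whether $g(t_1^{kn-k+1})\in\phi(I)$, then use the hypothesis on $\phi(I)$ together with $\phi(I)\subseteq I$ and $\delta(\phi(I))\subseteq\delta(I)$ in one case, and the $\phi$-$(k,n)$-absorbing $\delta$-primary property of $I$ in the other. The only cosmetic difference is that the paper assumes $g(t_1^{(k-1)n-k+2})\notin I$ up front, hence $\notin\phi(I)$, whereas you treat that alternative explicitly as your sub-case (a).
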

\begin{proof}
Let  $g(t_1^{kn-k+1}) \in I$ for $t_1^{kn-k+1} \in G$ but $g(t_1^{(k-1)n-k+2}) \notin I$. Suppose that $g(t_1^{kn-k+1}) \in \phi(I)$. Since $\phi(I)$ is a $(k,n)$-absorbing $\delta$-primary hyperideal of $G$ and $g(t_1^{(k-1)n-k+2}) \notin \phi(I)$, we conclude that a $g$-product of $(k-1)n-k+2$ of the $t_i^,$s, except $g(t_1^{(k-1)n-k+2})$ is in $\delta(\phi(I))\subseteq \delta(I)$, as required. Now, assume that $g(t_1^{kn-k+1}) \notin \phi(I)$. Since $I$ is a $\phi$-$(k,n)$-absorbing $\delta$-primary hyperideal of $G$,  $g(t_1^{kn-k+1}) \in I - \phi(I)$ and $g(t_1^{(k-1)n-k+2}) \notin I$, we get the result that a $g$-product of $(k-1)n-k+2$ of the $t_i^,$s, except $g(t_1^{(k-1)n-k+2})$ is in $\delta(I)$.
\end{proof}
The next theorem provides us how to determine the radical of a $\phi$-$(k,n)$-absorbing $\delta$-primary hyperideal to be a $\phi$-$(k,n)$-absorbing $\delta$-primary hyperideal.

\begin{theorem} \label{1}
Let $I$ be a proper hyperideal of $G$. If $I$ is a $\phi$-$(k,n)$-absorbing $\delta$-primary hyperideal of $G$ such that  ${\bf r}(\phi(I)) \subseteq \phi({\bf r}(I))$ and  ${\bf r}(\delta(I)) \subseteq \delta({\bf r}(I))$, then ${\bf r}(I)$ is a $\phi$-$(k,n)$-absorbing $\delta$-primary hyperideal of $G$.
\end{theorem}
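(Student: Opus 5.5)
Take $t_1^{kn-k+1}\in G$ with $g(t_1^{kn-k+1})\in {\bf r}(I)-\phi({\bf r}(I))$ and $g(t_1^{(k-1)n-k+2})\notin {\bf r}(I)$. We must show that some $g$-product of $(k-1)n-k+2$ of the $t_i$'s, other than $g(t_1^{(k-1)n-k+2})$, lies in $\delta({\bf r}(I))$. The strategy is to raise each $t_i$ to a suitable power so that the product falls into $I-\phi(I)$, apply the hypothesis on $I$, and then pass back through radicals.

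First, since $g(t_1^{kn-k+1})\in{\bf r}(I)$, Theorem 4.23 of \cite{sorc1} gives some $v\in\mathbb N$ such that the $v$-th $g$-power of $g(t_1^{kn-k+1})$ lies in $I$. Here the $v$-th power means $g(x^{(v)},1^{(n-v)})$ if $v\le n$, or $g_{(l)}(x^{(v)})$ if $v=l(n-1)+1$. Set $s_i$ to be the $v$-th $g$-power of $t_i$, formed in the same way. By commutativity and associativity of $g$, $g(s_1^{kn-k+1})$ equals the $v$-th power of $g(t_1^{kn-k+1})$, so it lies in $I$.

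Second, we check that $g(s_1^{kn-k+1})\notin\phi(I)$. If it were in $\phi(I)$, then $g(t_1^{kn-k+1})$ would have a power in $\phi(I)$, so $g(t_1^{kn-k+1})\in{\bf r}(\phi(I))\subseteq\phi({\bf r}(I))$, contradicting the choice of the $t_i$. Hence $g(s_1^{kn-k+1})\in I-\phi(I)$.

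Third, we apply the hypothesis that $I$ is $\phi$-$(k,n)$-absorbing $\delta$-primary. Either $g(s_1^{(k-1)n-k+2})\in I$, or some other $g$-product of $(k-1)n-k+2$ of the $s_i$'s lies in $\delta(I)$.
In the first case, $g(s_1^{(k-1)n-k+2})$ is a power of $g(t_1^{(k-1)n-k+2})$, so $g(t_1^{(k-1)n-k+2})\in{\bf r}(I)$. This contradicts our assumption, so the first case cannot occur.
In the second case, the corresponding product of the $t_i$'s has a power in $\delta(I)$. It therefore lies in ${\bf r}(\delta(I))\subseteq\delta({\bf r}(I))$, which is what we wanted.

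Finally, ${\bf r}(I)$ is proper: $I$ is proper, so it is contained in a maximal hyperideal, which is prime. Hence ${\bf r}(I)\neq G$.

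The main technical obstacle is the power-identity bookkeeping in the $n$-ary setting. Theorem 4.23 of \cite{sorc1} allows only exponents $v\le n$ or $v=l(n-1)+1$. When $v\le n$, the expression $g(t^{(v)},1^{(n-v)})$ must be combined across $kn-k+1$ arguments. To avoid case analysis, I would first enlarge $v$ to the form $l(n-1)+1$; this is allowed because $I$ absorbs further multiplication. Then all powers are $g_{(l)}$-powers, and rearranging factors via commutativity gives the identity $g(s_1^{j})=\big(g(t_1^{j})\big)^{v}$ for any block of $j$ arguments. Radical membership in each step then follows directly from Theorem 4.23 of \cite{sorc1}.
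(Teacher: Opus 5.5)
Your proposal is correct and follows essentially the same route as the paper. You power each $t_i$ so that the product lands in $I-\phi(I)$, using ${\bf r}(\phi(I))\subseteq\phi({\bf r}(I))$. You then apply the hypothesis on $I$ and pull back through radicals via ${\bf r}(\delta(I))\subseteq\delta({\bf r}(I))$. The only differences are that you argue the contrapositive arrangement and normalize $v$ to the form $l(n-1)+1$ instead of splitting into the two cases. You also check that ${\bf r}(I)$ is proper, which the paper leaves unstated.
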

\begin{proof}
Let  $g(t_1^{kn-k+1}) \in {\bf r}(I)-\phi({\bf r}(I))$ for $t_1^{kn-k+1} \in G$ . Suppose that  all products of $(k-1)n-k+2$ of the $t_i^,$s except $g(t_1^{(k-1)n-k+2})$ are not in $\delta({\bf r}(I))$. Since $g(t_1^{kn-k+1}) \in {\bf r}(I)$, then there exists $v \in \mathbb{Z}^+$ with $g(g(t_1^{kn-k+1})^{(v)},1^{(n-v)}) \in I$, for $v \leq n$ or $g_{(l)}(g(t_1^{kn-k+1})^{(v)}) \in I$, for $v>n$, $v=l(n-1)+1$. In the first possibility, if $g(g(t_1^{kn-k+1})^{(v)},1^{(n-v)}) \in \phi(I)$, then we get $g(t_1^{kn-k+1}) \in {\bf r}(\phi(I)) \subseteq \phi({\bf r}(I))$ which is impossible. Therefore we have $g(g(t_1^{(v)},1_G^{(n-v)}),\ldots,g(t_{kn-k+1}^{(v)},1_G^{(n-v)})) \in I - \phi(I)$.   Then, we conclude that  $g(g(t_1^{(v)},1_G^{(n-v)}),\ldots,g(t_{(k-1)n-k+2}^{(v)}),1_G^{(n-v)}))=g(g(t_1^{(k-1)n-k+2)})^{(v)},1_G^{(n-v)}) \in I$ and so $g(t_1^{(k-1)n-k+2}) \in {\bf r}(I)$ as  $I$ is a $\phi$-$(k,n)$-absorbing $\delta$-primary hyperideal of $G$ and all products of $(k-1)n-k+2$ of the $t_i^,$s except $g(t_1^{(k-1)n-k+2})$ are not in ${\bf r}(\delta(I))$. For the other case, we are done by using a similar argument. Thus, ${\bf r}(I)$ is a $\phi$-$(k,n)$-absorbing  $\delta$-primary hyperideal of $G$.
\end{proof}
The following example shows that the conditions mentioned in Theorem \ref{1}, namely $``{\bf r}(\phi(I)) \subseteq \phi({\bf r} (I))"$ and $``{\bf r}(\delta(I)) \subseteq \delta({\bf r} (I))"$,   can not be ignored. 
\begin{example}\label{ignor}
Consider Krasner $(m,n)$-hyperring $G=H/I$ with ordinary addition and ordinary multiplication where $H=\mathbb{Z}_3[X,Y,Z]$ and $I=\langle X^3Y^3Z^3\rangle$.
 Then $I/I$ is a $\phi_0$-$(1,3)$-absorbing $\delta_0$-primary hyperideal of $G$. Note that ${\bf r}(\phi_0(I/I)) \nsubseteq \phi_0({\bf r} (I/I))$ and 
${\bf r} (I/I)$ is not a $\phi_0$-$(1,3)$-absorbing $\delta_0$-primary hyperideal of $G$ because $2XYZ+I=(2X+I)(Y+I)(Z+I) \in {\bf r} (I/I) - \phi_0({\bf r} (I/I))$ but  elements $(2X+I), (Y+I)$ and $(Z+I)$ are  not in  $\delta_0({\bf r} (I/I))$.
\end{example}
\begin{theorem}
Let  $I$ be  a $\phi$-$(1,n)$-absorbing $\delta$-primary hyperideal of $G$. Then $I$ is a $\phi$-$(2,n)$-absorbing $\delta$-primary hyperideal.
\end{theorem}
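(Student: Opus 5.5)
This is the special case $k=1$, $u=2$ of the theorem above stating that a $\phi$-$(k,n)$-absorbing $\delta$-primary hyperideal is $\phi$-$(u,n)$-absorbing $\delta$-primary for all $u \geq k$. So the statement follows by simply invoking that result. Still, I would write a short direct argument, since the case $k=1$ is particularly transparent.

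For $k=1$ the definition says the following. If $g(t_1^n) \in I-\phi(I)$, then either $g(t_1^{(k-1)n-k+2})=t_1 \in I$, or some single $t_i$ with $i \neq 1$ lies in $\delta(I)$. Because $g$ is commutative, this applies to whichever factor we choose to list first.

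Now take $t_1^{2n-1} \in G$ with $g(t_1^{2n-1}) \in I-\phi(I)$. We must show that either $g(t_1^n) \in I$, or a $g$-product of $n$ of the $t_i$'s other than $g(t_1^n)$ lies in $\delta(I)$. Put $s=g(t_1^n)$, so that $g(s,t_{n+1}^{2n-1})=g(t_1^{2n-1}) \in I-\phi(I)$, a product of $n$ elements. Applying the $\phi$-$(1,n)$-absorbing $\delta$-primary property with $s$ listed first gives two cases.
\begin{itemize}
\item[\rm(a)] $s=g(t_1^n)\in I$, which is exactly what we want.
\item[\rm(b)] $t_j \in \delta(I)$ for some $j\in\{n+1,\ldots,2n-1\}$. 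Since $\delta(I)$ is a hyperideal, any $g$-product of $n$ of the $t_i$'s that contains $t_j$ lies in $\delta(I)$. Take $g(t_2^{n},t_j)$ if $j\neq$ one of $t_2^n$ (which holds since $j>n$); this product differs from $g(t_1^n)$ because it includes $t_j$ with $j>n$.
\end{itemize}

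The only mildly delicate point is case (b): we need the chosen $n$-product in $\delta(I)$ to be a genuinely different product from $g(t_1^n)$. This is guaranteed by including $t_j$, and $2n-1\geq n+1$ for $n\geq 2$ ensures such an index $j$ exists. Everything else is the absorption property of hyperideals together with commutativity of $g$.
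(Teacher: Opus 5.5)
Your argument is correct and is essentially the paper's proof: set $s=g(t_1^n)$, apply the $(1,n)$ condition to $(s,t_{n+1},\ldots,t_{2n-1})$, and in the second case use that $\delta(I)$ is a hyperideal to get an $n$-fold product other than $g(t_1^n)$ inside $\delta(I)$. The only difference is minor. The paper reads the second alternative as $g(1_G,t_{n+1}^{2n-1})\in\delta(I)$ and outputs $g(t_i,t_{n+1}^{2n-1})$, so its argument also works under a primary-style reading of the $k=1$ case. You use the literal reading $t_j\in\delta(I)$, which the definition as written supports, and output $g(t_2^n,t_j)$.
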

\begin{proof}
Assume that $I$ is a $\phi$-$(1,n)$-absorbing $\delta$-primary hyperideal of $G$. Let $t_1^{2n-1} \in G$ such that $g(g(t_1^n),t_{n+1},\ldots,t_{2n-1}) \in I-\phi(I)$. Therefore we get  $g(t_1^n) \in I$ or $g(1_G,t_{n+1}^{2n-1}) \in \delta(I)$.  This implies that  $g(t_i,t_{n+1},\ldots,t_{2n-1}) \in  \delta(I)$ for each $i \in \{1,\ldots,n\}$
 as $t_1^n \in G$ and $\delta(I)$ is a hyperideal of $G$. Hence $I$ is a $\phi$-$(2,n)$-absorbing $\delta$-primary hyperideal of $G$.
\end{proof}
\begin{theorem}
Let $\phi_1$ and $ \phi_2$  be    reduction functions such that for each $I \in  \mathcal{L}(G)$, $\phi_1(I) \subseteq \phi_2(I)$. If $I$ is a $\phi_1$-$(k,n)$-absorbing $\delta$-primary hyperideal of $G$, then $I$ is a $\phi_2$-$(k,n)$-absorbing $\delta$-primary hyperideal.
\end{theorem}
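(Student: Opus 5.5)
The argument is a direct unwinding of the definition. The only input is the inclusion $I-\phi_2(I)\subseteq I-\phi_1(I)$, which holds because $\phi_1(I)\subseteq\phi_2(I)$ by hypothesis. No property of $\delta$ and no earlier theorem beyond the definition of a $\phi$-$(k,n)$-absorbing $\delta$-primary hyperideal is needed.

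Fix $t_1^{kn-k+1}\in G$ with $g(t_1^{kn-k+1})\in I-\phi_2(I)$. Then $g(t_1^{kn-k+1})\in I$ and $g(t_1^{kn-k+1})\notin\phi_2(I)$. Since $\phi_1(I)\subseteq\phi_2(I)$, also $g(t_1^{kn-k+1})\notin\phi_1(I)$, so $g(t_1^{kn-k+1})\in I-\phi_1(I)$. The case $\phi_1(I)=\varnothing$ fits the same argument, since then $I-\phi_1(I)=I$.

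Because $I$ is a $\phi_1$-$(k,n)$-absorbing $\delta$-primary hyperideal, it follows that $g(t_1^{(k-1)n-k+2})\in I$, or that some $g$-product of $(k-1)n-k+2$ of the $t_i^,$s, other than $g(t_1^{(k-1)n-k+2})$, lies in $\delta(I)$. This is exactly the conclusion required in the definition of a $\phi_2$-$(k,n)$-absorbing $\delta$-primary hyperideal. Properness of $I$ holds by assumption, so $I$ is $\phi_2$-$(k,n)$-absorbing $\delta$-primary.

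There is no real obstacle. The one point to check is that the inclusion $\phi_1(I)\subseteq\phi_2(I)$ runs in the direction that enlarges the set of excluded elements: a larger $\phi$ gives a weaker hypothesis in the definition, so the implication passes from $\phi_1$ to $\phi_2$.
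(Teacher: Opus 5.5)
Your proof is correct and follows the paper's argument exactly: from $\phi_1(I)\subseteq\phi_2(I)$ you get $I-\phi_2(I)\subseteq I-\phi_1(I)$, and you then apply the $\phi_1$-$(k,n)$-absorbing $\delta$-primary condition. Your remark that the case $\phi_1(I)=\varnothing$ needs no separate treatment is a harmless addition.
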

\begin{proof}
Assume that $g(t_1^{kn-k+1}) \in I-\phi_2(I)$ for $t_1^{kn-k+1} \in G$. Since $\phi_1(I) \subseteq \phi_2(I)$, we obtain $g(t_1^{kn-k+1}) \in I-\phi_1(I)$. Therefore, we get $g(t_1^{(k-1)n-k+2}) \in I$ or a $g$-product  of $(k-1)n-k+2$ of  $t_i^,$s, except $g(t_1^{(k-1)n-k+2}),$ is in $\delta(I)$ as  $I$ is a $\phi_1$-$(k,n)$-absorbing $\delta$-hyperideal of $G$. This shows that $I$ is a $\phi_2$-$(k,n)$-absorbing $\delta$-primary hyperideal.
\end{proof}
Here, we investigate
whether the union of the collection of $\phi$-$(k,n)$-absorbing $\delta$-primary hyperideals preserve the algebraic structure.
\begin{theorem}
The union of  a directed family of $\phi$-$(k,n)$-absorbing $\delta$-primary hyperideals of $G$ is a $\phi$-$(k,n)$-absorbing $\delta$-primary hyperideal.
\end{theorem}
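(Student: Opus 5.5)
Let $\{I_\alpha\}_{\alpha\in\Lambda}$ be a directed family of $\phi$-$(k,n)$-absorbing $\delta$-primary hyperideals of $G$, and put $I=\bigcup_{\alpha\in\Lambda} I_\alpha$. I first check that $I$ is a proper hyperideal. Given finitely many elements of $I$, directedness places them all in a single $I_\gamma$. Hence the $m$-ary subhypergroup axioms and the absorption $g(t_1^{i-1},I,t_{i+1}^n)\subseteq I$ follow from those of $I_\gamma$. Properness holds because $1\notin I_\alpha$ for every $\alpha$, so $1\notin I$.

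Next, let $t_1^{kn-k+1}\in G$ with $g(t_1^{kn-k+1})\in I-\phi(I)$. Then $g(t_1^{kn-k+1})\in I_\beta$ for some $\beta$. Since $I_\beta\subseteq I$, monotonicity of the reduction function gives $\phi(I_\beta)\subseteq\phi(I)$, so $g(t_1^{kn-k+1})\notin\phi(I_\beta)$. Thus $g(t_1^{kn-k+1})\in I_\beta-\phi(I_\beta)$. Because $I_\beta$ is $\phi$-$(k,n)$-absorbing $\delta$-primary, one of two things holds. Either $g(t_1^{(k-1)n-k+2})\in I_\beta\subseteq I$, or some $g$-product of $(k-1)n-k+2$ of the $t_i$'s, other than $g(t_1^{(k-1)n-k+2})$, lies in $\delta(I_\beta)$. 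In the second case, monotonicity of the expansion function gives $\delta(I_\beta)\subseteq\delta(I)$. Either way $I$ satisfies the defining condition.

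The only delicate point is the monotonicity step for $\phi$. Passing from $I$ to $I_\beta$ must preserve the condition of lying outside the reduction, and this uses exactly $\phi(I_\beta)\subseteq\phi(I)$. The rest is routine, and directedness is needed only to make $I$ a hyperideal.
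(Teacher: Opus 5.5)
Your proof is correct and follows essentially the same route as the paper. You locate $g(t_1^{kn-k+1})$ in a single $I_\beta$, use $\phi(I_\beta)\subseteq\phi(I)$ to place it in $I_\beta-\phi(I_\beta)$, apply the hypothesis, and lift the conclusion back via $I_\beta\subseteq I$ and $\delta(I_\beta)\subseteq\delta(I)$. You are more careful than the paper, which leaves implicit both the monotonicity steps and the check that the union is a proper hyperideal, and which states its conclusion in the looser ``some $(k-1)n-k+2$ of the $t_i$'s'' form rather than the precise $\delta$-primary form you use.
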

\begin{proof}
Let $ \{I_u\}_{u \in \Delta}$ be a directed family of $\phi$-$(k,n)$-absorbing $\delta$-primary hyperideals of $G$. Assume that $g(t_1^{kn-k+1}) \in \bigcup_{u \in \Delta}I_u-\phi(\bigcup_{u \in \Delta}I_u)$ for $t_1^{kn-k+1} \in G$ such that none of $g$-product  of $(k-1)n-k+2$ of  $t_i^,$s is in $\delta(\bigcup_{u \in \Delta}I_u)$. Then, there exists a $v \in \Delta$ such that $g(t_1^{kn-k+1}) \in I_v-\phi(I_v)$ such  that none of $g$-product  of $(k-1)n-k+2$ of  $t_i^,$s is in $\delta(I_v)$. Therefore,  there exist $(k-1)n-k+2$ of  $t_i^,$s whose $g$-product is in $ I_v$ as $I_v$ is a  $\phi$-$(k,n)$-absorbing $\delta$-primary hyperideals of $G$. This implies that the $g$-product of  $(k-1)n-k+2$ of   $t_i^,$s is in $\bigcup_{u \in \Delta}I_u$. Consequently,  $\bigcup_{u \in \Delta} I_u$ is a $\phi$-$(k,n)$-absorbing $\delta$-primary hyperideal of $G$.
\end{proof}
The next theorem presents conditions under which the intersection of  a family of $\phi$-$(k,n)$-absorbing $\delta$-primary hyperideals is a $\phi$-$(k,n)$-absorbing $\delta$-primary hyperideal.
\begin{theorem} \label{kharazi}
Let $ \{I_u\}_{u \in \Delta}$ be a family of  hyperideals of $G$ such that $\delta(I_u)=\delta(I_v)$ and $\phi(I_u)=\phi(I_v)$ for all $u,v \in \Delta$. If $I_u$ is a $\phi$-$(k,n)$-absorbing $\delta$-primary hyperideal of $G$ for each $u \in \Delta$ and $\phi,\delta$ have the property of intersection
preserving, then $\bigcap_{u \in \Delta}I_u$ is a $\phi$-$(k,n)$-absorbing $\delta$-primary hyperideal of $G$.
\end{theorem}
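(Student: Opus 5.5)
Write $I=\bigcap_{u\in\Delta}I_u$. By the intersection-preserving property and the equalities $\delta(I_u)=\delta(I_v)$ and $\phi(I_u)=\phi(I_v)$, we get
\[
\delta(I)=\bigcap_{u\in\Delta}\delta(I_u)=\delta(I_w),\qquad \phi(I)=\bigcap_{u\in\Delta}\phi(I_u)=\phi(I_w)
\]
for any fixed $w\in\Delta$. This identification is the main step, since it lets us move between $I$ and the individual $I_u$ without losing control of $\phi$ or $\delta$. Note also that $I$ is a hyperideal and is proper, because it is contained in the proper hyperideal $I_w$.

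Now let $t_1^{kn-k+1}\in G$ with $g(t_1^{kn-k+1})\in I-\phi(I)$, and assume that $g(t_1^{(k-1)n-k+2})\notin I$. We must show that some $g$-product of $(k-1)n-k+2$ of the $t_i$'s, other than $g(t_1^{(k-1)n-k+2})$, lies in $\delta(I)$. Since $g(t_1^{(k-1)n-k+2})\notin I$, there is some $v\in\Delta$ with $g(t_1^{(k-1)n-k+2})\notin I_v$.

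We have $g(t_1^{kn-k+1})\in I\subseteq I_v$. Moreover $g(t_1^{kn-k+1})\notin\phi(I)=\phi(I_v)$. Hence $g(t_1^{kn-k+1})\in I_v-\phi(I_v)$. Since $I_v$ is $\phi$-$(k,n)$-absorbing $\delta$-primary and $g(t_1^{(k-1)n-k+2})\notin I_v$, some $g$-product of $(k-1)n-k+2$ of the $t_i$'s, other than $g(t_1^{(k-1)n-k+2})$, lies in $\delta(I_v)=\delta(I)$. This is exactly what is required, so $I$ is a $\phi$-$(k,n)$-absorbing $\delta$-primary hyperideal of $G$.

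The only delicate point is the reading of ``intersection preserving''. If it is taken to mean $\phi(\bigcap I_u)=\bigcap\phi(I_u)$, and likewise for $\delta$, then the argument goes through as written. If only one inclusion holds, the monotonicity of the reduction and expansion functions still gives $\phi(I)\subseteq\phi(I_v)$ and $\delta(I)\subseteq\delta(I_v)$. So the remaining obstacle would be the inclusion $\delta(I_v)\subseteq\delta(I)$, and that inclusion is exactly what the hypothesis supplies.
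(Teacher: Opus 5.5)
Your proof is correct and is essentially the paper's argument in contrapositive form. The paper assumes no other $g$-product of $(k-1)n-k+2$ of the $t_i$'s lies in $\delta(I)=\delta(I_u)$ and concludes $g(t_1^{(k-1)n-k+2})\in I_u$ for every $u$. You instead pick one $v$ with $g(t_1^{(k-1)n-k+2})\notin I_v$; both versions rest on $\phi(I)=\phi(I_u)$ and $\delta(I)=\delta(I_u)$.

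One correction to your closing remark: monotonicity gives $\phi(I)\subseteq\phi(I_v)$, which is the wrong direction for the step $g(t_1^{kn-k+1})\notin\phi(I)\Rightarrow g(t_1^{kn-k+1})\notin\phi(I_v)$. So the intersection-preserving hypothesis is needed for $\phi$ (to get $\phi(I_v)\subseteq\phi(I)$) just as much as for $\delta$, not only for $\delta$ as you suggest.
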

\begin{proof}
Put $I=\bigcap_{u \in \Delta}I_u$. Assume that $g(t_1^{kn-k+1}) \in I-\phi(I)$ for $t_1^{kn-k+1} \in G$ such that none of $g$-product  of $(k-1)n-k+2$ of  $t_i^,$s is in $\delta(I)$. Hence, we obtain $g(t_1^{kn-k+1}) \in I_u-\phi(I_u)$ for each $u \in \Delta$ and none of $g$-product  of $(k-1)n-k+2$ of  $t_i^,$s is in $\delta(I_u)$. Since $I_u$ is a $\phi$-$(k,n)$-absorbing $\delta$-primary hyperideal of $G$ for each $u \in \Delta$, there exist $(k-1)n-k+2$ of  $t_i^,$s whose $g$-product is in $ I_u$  which implies the $g$-product of  $(k-1)n-k+2$ of   $t_i^,$s is in $I$. Thus, $I=\bigcap_{u \in \Delta}I_u$ is a $\phi$-$(k,n)$-absorbing $\delta$-primary hyperideal of $G$.
\end{proof}
Let $Q$ be a hyperideal of $(G,f,g)$. Then, $(G/Q,f,g)$ is the quotient Krasner $(m,n)$-hyperring of $G$ by $Q$ where $G/Q=\{f(t_1^{i-1},Q,t_{i+1}^m) \ \vert \ t_1^{i-1},t_{i+1}^m \in G$\} \cite{sorc1}.   Recall from \cite{weak} that a hyperideal $I$  of $G$ is called  a weakly $(k,n)$-absorbing primary hyperideal   if   $0 \neq g(t_1^{kn-k+1}) \in I$ for $t_1^{kn-k+1} \in G$ implies that  $g(t_1^{(k-1)n-k+2}) \in I$ or a $g$-product  of $(k-1)n-k+2$ of  $t_i^,$s ,except $g(t_1^{(k-1)n-k+2})$, is in ${\bf r}(I)$. Now, we say that $I$ is a weakly $(k,n)$-absorbing $\delta$-primary hyperideal of $G$ if   $0 \neq g(t_1^{kn-k+1}) \in I$ for $t_1^{kn-k+1} \in G$ implies that  $g(t_1^{(k-1)n-k+2}) \in I$ or a $g$-product  of $(k-1)n-k+2$ of  $t_i^,$s ,except $g(t_1^{(k-1)n-k+2})$, is in $\delta(I)$. Moreover, we say that $I$ is a strongly $\phi$-$(k,n)$-absorbing $\delta$-primary hyperideal of $G$ if $g(I_1^{kn-k+1}) \subseteq I-\phi(I)$ for  hyperideals $I_1^{kn-k+1}$ of $G$ implies that $g(I_1^{(k-1)n-k+2}) \subseteq I$ or a $g$-product of $(k-1)n-k+2$ of $I_i^,$s, except $g(I_1^{(k-1)n-k+2}),$ is a subset of $\delta(I)$. In Theorem \ref{weak} and Theorem \ref{madar}, we assume that all $\phi$-$(2,2)$-absorbing $\delta$-primary  hyperideals are strongly $\phi$-$(2,2)$-absorbing $\delta$-primary  hyperideals.
\begin{theorem} \label{weak}
Let $I$ be a proper hyperideal of $G$. Then $I$  is   a  $\phi$-$(2,2)$-absorbing $\delta$-primary hyperideal of $G$ if and only if   $I/\phi(I)$ is a  weakly $(2,2)$-absorbing $\delta_q$-primary hyperideal of $G/\phi(I)$. 
\end{theorem}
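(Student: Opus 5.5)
Throughout, write $Q=\phi(I)$ and work in the quotient $G/Q$, whose elements are the cosets $f(t,Q,0^{(m-2)})$. Write $\bar t$ for the coset of $t$. The multiplication on $G/Q$ is induced by $g$, so $g(\bar t_1^{3})=\overline{g(t_1^3)}$, and the zero of $G/Q$ is $Q$ itself. I read $\delta_q$ as the expansion function on $\mathcal{L}(G/Q)$ given by $\delta_q(J/Q)=\delta(J)/Q$ for hyperideals $J\supseteq Q$. This is well defined because $\delta(J)\supseteq J\supseteq Q$, and it is an expansion function because $\delta$ is monotone and extensive.

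Two standard quotient facts will be checked first. The first is that for $x\in G$ we have $\bar x\in J/Q$ if and only if $x\in J$, whenever $Q\subseteq J$. This uses that $J$ is closed under $f$ and that $Q\subseteq J$. The second is that $\bar x=0_{G/Q}$ if and only if $x\in Q$. With these, membership of a product in $I-\phi(I)$ translates exactly into $0\neq g(\bar t_1^3)\in I/Q$. Likewise, a condition $g(t_i,t_j)\in I$ or $g(t_i,t_j)\in\delta(I)$ translates into $g(\bar t_i,\bar t_j)\in I/Q$ or $g(\bar t_i,\bar t_j)\in\delta_q(I/Q)$. For $(k,n)=(2,2)$ the relevant products are $g(t_1^3)$, $g(t_1,t_2)$, and the "other" pairs $g(t_1,t_3)$ and $g(t_2,t_3)$.

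($\Rightarrow$) Take $\bar t_1,\bar t_2,\bar t_3\in G/Q$ with $0\neq g(\bar t_1^3)\in I/Q$. Then $g(t_1^3)\in I$ and $g(t_1^3)\notin Q=\phi(I)$. Applying the $\phi$-$(2,2)$-absorbing $\delta$-primary property gives $g(t_1,t_2)\in I$, or $g(t_1,t_3)\in\delta(I)$, or $g(t_2,t_3)\in\delta(I)$. Passing to cosets yields the weakly $(2,2)$-absorbing $\delta_q$-primary condition. One must also note that $I/Q$ is proper, which holds since $I$ is proper.

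($\Leftarrow$) Suppose $g(t_1^3)\in I-\phi(I)$. Then $g(\bar t_1^3)=\overline{g(t_1^3)}$ lies in $I/Q$ and is nonzero, because $g(t_1^3)\notin Q$. The weak property in $G/Q$ gives one of the three pair conditions on cosets, and this lifts back to $G$ via the first fact.

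The main obstacle is the bookkeeping of the quotient. One has to make sure that $g$ on cosets really equals the coset of $g$ applied to representatives, and that the zero coset is exactly $Q$. This needs care because $f$ is a hyperoperation and $Q$ must be a hyperideal; if $\phi(I)=\varnothing$, the quotient should be read as $G$ itself. The standing assumption that $\phi$-$(2,2)$-absorbing $\delta$-primary hyperideals are strongly so would only be needed if the statement is read hyperideal-wise. In that case I would run the same argument with hyperideals $I_1,I_2,I_3$ in place of elements, using $g(I_1^3)\subseteq I-\phi(I)$ and its image in $G/Q$.
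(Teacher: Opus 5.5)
Your argument is correct, and it takes a different route from the paper in the forward direction.

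\textbf{The paper's route.} For ($\Rightarrow$), the paper takes elements of $G/\phi(I)$ in the general form $f(t_{1}^{i-1},\phi(I),t_{i+1}^{m})$ and expands their product by distributivity. It then passes to the subsets $f(\ldots,0,\ldots)$, whose $g$-product it places in $I-\phi(I)$. At that point it invokes the standing hypothesis that $I$ is \emph{strongly} $\phi$-$(2,2)$-absorbing $\delta$-primary, to handle these set-level products. Only ($\Leftarrow$) is done elementwise with the cosets $f(t,\phi(I),0^{(m-2)})$, exactly as you do it.

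\textbf{Your route.} You use that $\pi(t)=f(t,\phi(I),0^{(m-2)})$ is a surjective homomorphism; the paper itself relies on this, citing Theorem 3.2 of \cite{sorc1}. So every coset has a single representative. Both directions then become the same elementwise translation through your two facts: $\bar x\in J/Q\iff x\in J$ for $Q\subseteq J$, and $\bar x=0\iff x\in Q$.

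\textbf{Comparison.} Your version is cleaner and shows that the ``strongly'' assumption is not actually needed for this theorem. The paper's version avoids single-representative cosets, but pays with an extra set-level hypothesis. Moreover, it applies that hypothesis to sets $f(t_{1}^{i-1},0,t_{i+1}^{m})$, which are not hyperideals, so the definition of ``strongly'' does not literally cover them. Your observation that the strong hypothesis matters only for a hyperideal-wise reading is accurate.

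\textbf{One side remark is wrong.} If $\phi(I)=\varnothing$, reading $G/\phi(I)$ as $G$ does not rescue the equivalence. The left side becomes the full $(2,2)$-absorbing $\delta$-primary condition, while the right side is only the weak one. For example, take $\{0\}$ in $\mathbb{Z}_8$ with $\delta_0$. It is vacuously weakly $(2,2)$-absorbing $\delta_0$-primary. It is not $(2,2)$-absorbing $\delta_0$-primary, since $2\cdot 2\cdot 2=0$ while $2\cdot 2\neq 0$. The statement simply presupposes that $\phi(I)$ is a hyperideal, and you should say that instead.
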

\begin{proof}
 $\Longrightarrow$ Let  $I$  be  a strongly $\phi$-$(2,2)$-absorbing $\delta$-primary hyperideal of $G$ and \\

$\phi(I) \neq g(f(t_{11}^{1(i-1)},\phi(I),t_{1(i+1)}^{1m}),f(t_{21}^{2(i-1)},\phi(I),t_{2(i+1)}^{2m}),f(t_{31}^{3(i-1)},\phi(I),t_{3(i+1)}^{3m}))$

$\hspace{0.7cm}=f(g(t_{11}^{31}),\ldots,g(t_{1(i-1)}^{3(i-1)}),\phi(I),g(t_{1(i+1)}^{3(i+1)}),\ldots,g(t_{1m}^{3m}))$

$\hspace{0.7cm} \in I/\phi(I)$. 

for $t_{11}^{1m},t_{21}^{2m},t_{31}^{3m}\in G$.
Then we conclude that

$\hspace{1cm} f(g(t_{11}^{31}),\ldots, g(t_{1(i-1)}^{3(i-1)}),0,g(t_{1(i+1)}^{3(i+1)}),\ldots,g(t_{1m}^{3m}))$  

$\hspace{1cm}=g(f(t_{11}^{1(i-1)},0,t_{1(i+1)}^{1m}),f(t_{21}^{2(i-1)},0,t_{2(i+1)}^{2m}),f(t_{31}^{3(i-1)},0,t_{3(i+1)}^{3m}))$

$\hspace{1cm}\subseteq  I-\phi(I).$\\
This implies that

 $\hspace{1cm}g(f(t_{11}^{1(i-1)},0,t_{1(i+1)}^{1m}),f(t_{21}^{2(i-1)},0,t_{2(i+1)}^{2m}))$

$ \hspace{1.5cm}=f(g(t_{11}^{21}),\ldots,g(t_{1(i-1)}^{2(i-1)}),0,g(t_{1(i+1)}^{2(i+1)}),\ldots,g(t_{1m}^{2m}))\subseteq I$\\ or 

$\hspace{1cm}g(f(t_{21}^{2(i-1)},0,t_{2(i+1)}^{2m}),f(t_{31}^{3(i-1)},0,t_{3(i+1)}^{3m}))$

 $\hspace{1.5cm}=f(g(t_{21}^{31}),\ldots,g(t_{2(i-1)}^{3(i-1)}),0,g(t_{2(i+1)}^{3(i+1)}),\ldots,g(t_{2m}^{3m}))\subseteq \delta(I)$\\
or

$\hspace{1cm}g(f(t_{11}^{1(i-1)},0,t_{1(i+1)}^{1m}),f(t_{31}^{3(i-1)},0,t_{3(i+1)}^{3m}))$

$\hspace{1.5cm}=f(g(t_{11}^{31}),\ldots,g(t_{1(i-1)}^{3(i-1)}),0,g(t_{1(i+1)}^{3(i+1)}),\ldots,g(t_{1m}^{3m}))\subseteq \delta(I)$.\\ 
as $I$ is a strongly $\phi$-$(2,2)$-absorbing $\delta$-primary hyperideal of $G$. Therefore, we get the result that 

$\hspace{1cm}f(g(t_{11}^{21}),\ldots,g(t_{1(i-1)}^{2(i-1)}),\phi(I),g(t_{1(i+1)}^{2(i+1)}),\ldots,g(t_{1m}^{2m}))$

$\hspace{1.5cm}=g(f(t_{11}^{1(i-1)},\phi(I),t_{1(i+1)}^{1m}),f(t_{21}^{2(i-1)},\phi(I),t_{2(i+1)}^{2m}))\in I/\phi(I)$\\ or  

$\hspace{1cm}f(g(t_{21}^{31}),\ldots,g(t_{2(i-1)}^{3(i-1)}),\phi(I),g(t_{2(i+1)}^{3(i+1)}),\ldots,g(t_{2m}^{3m}))$

$\hspace{1.5cm}=g(f(t_{21}^{2(i-1)},\phi(I),t_{2(i+1)}^{2m}),f(t_{31}^{3(i-1)},\phi(I),t_{3(i+1)}^{3m}))$

$\hspace{1.5cm}\in \delta(I)/\phi(I)=\delta_q(I/\phi(I))$\\
or 

$\hspace{1cm}f(g(t_{11}^{31}),\ldots,g(t_{1(i-1)}^{3(i-1)}),\phi(I),g(t_{1(i+1)}^{3(i+1)}),\ldots,g(t_{1m}^{3m}))$

$\hspace{1.5cm}=g(f(t_{11}^{1(i-1)},\phi(I),t_{1(i+1)}^{1m}),f(t_{31}^{3(i-1)},\phi(I),t_{3(i+1)}^{3m}))$

$ \hspace{1.5cm} \in \delta(I)/\phi(I)=\delta_q(I/\phi(I))$.\\
Consequently, $I/\phi(I)$ is a  weakly $(2,2)$-absorbing $\delta_q$-primary hyperideal of $G/\phi(I)$. 

$\Longleftarrow $ Assume that  $g(t_1^3) \in I-\phi(I)$ for  $t_1^3 \in G$. Then $f(g(t_1^3),\phi(I),0^{(m-2)}) \neq \phi(I)$. Therefore we conclude that

$\phi(I) \neq g(f(t_1,\phi(I),0^{(m-2)}),f(t_2,\phi(I),0^{(m-2)}),f(t_3,\phi(I),0^{(m-2)})) \in I/\phi(I)$. \\
 Since $I/\phi(I)$ is a  weakly $(2,2)$-absorbing $\delta_q$-primary hyperideal of $G/\phi(I)$, we have 
 
 $g(f(t_1,\phi(I),0^{(m-2)}),f(t_2,\phi(I),0^{(m-2)}))=f(g(t_1^2),\phi(I),0^{(m-2)}) \in I/\phi(I)$.\\
or 
 
 $g(f(t_2,\phi(I),0^{(m-2)}),f(t_3,\phi(I),0^{(m-2)}))  =f(g(t_2^3),\phi(I),0^{(m-2)}) \in \delta(I)/\phi(I)$.\\
or 
 
$g(f(t_1,\phi(I),0^{(m-2)}),f(t_3,\phi(I),0^{(m-2)})) =f(g(t_1^3),\phi(I),0^{(m-2)})  \in \delta(I)/\phi(I)$.\\
Hence we obtain  $g(t_1^2) \in I$ or $g(t_2^3) \in \delta(I)$ or $g(t_1^3) \in \delta(I)$. Thus, $I$  is   a $\phi$-$(2,2)$-absorbing primary hyperideal of $G$. 
\end{proof}
\begin{theorem} \label{madar}
Let $I$ be a proper hyperideal of $G$. Then $I$  is   a  $\phi$-$(k,n)$-absorbing $\delta$-primary hyperideal of $G$ if and only if   $I/\phi(I)$ is a   weakly $(k,n)$-absorbing $\delta_q$-primary hyperideal of $G/\phi(I)$. 
\end{theorem}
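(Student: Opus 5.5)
The plan is to generalize the argument of Theorem \ref{weak} from $(2,2)$ to arbitrary $(k,n)$. We keep the standing assumption made before Theorem \ref{weak} that $\phi$-$(k,n)$-absorbing $\delta$-primary hyperideals are strongly so, and we set $\delta_q(I/\phi(I))=\delta(I)/\phi(I)$. Throughout we write elements of $G/\phi(I)$ as cosets $\bar t=f(t,\phi(I),0^{(m-2)})$. The basic tool is the distributivity axiom (3) together with $\phi(I)$ being a hyperideal. Together they give
\[
g(\bar t_1,\ldots,\bar t_{n})=f(g(t_1^n),\phi(I),0^{(m-2)}),
\]
and by iterating, the same formula holds for any $g$-product of $kn-k+1$ or $(k-1)n-k+2$ cosets. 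We also use that $\bar s=\phi(I)$ (the zero of $G/\phi(I)$) exactly when $s\in\phi(I)$. Finally, $\bar s\in I/\phi(I)$ iff $s\in I$, and $\bar s\in\delta(I)/\phi(I)$ iff $s\in\delta(I)$; this holds because $\phi(I)\subseteq I\subseteq\delta(I)$ and hyperideals are closed under $f$ with their elements, by the reversibility of the canonical hypergroup.

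($\Longrightarrow$) Take cosets $\bar t_1,\ldots,\bar t_{kn-k+1}$ with $\phi(I)\neq g(\bar t_1^{\,kn-k+1})\in I/\phi(I)$. By the formula above, $g(t_1^{kn-k+1})\in I$ and $g(t_1^{kn-k+1})\notin\phi(I)$, that is, $g(t_1^{kn-k+1})\in I-\phi(I)$. The hypothesis on $I$ then gives $g(t_1^{(k-1)n-k+2})\in I$, or some other $g$-product of $(k-1)n-k+2$ of the $t_i$'s lies in $\delta(I)$. Translating back, the corresponding product of the $\bar t_i$ lies in $I/\phi(I)$ or in $\delta(I)/\phi(I)$. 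So $I/\phi(I)$ is weakly $(k,n)$-absorbing $\delta_q$-primary.

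The coset representatives need some care here. A general element of $G/\phi(I)$ has the form $f(t_1^{i-1},\phi(I),t_{i+1}^m)$. Since $f$ is an $m$-ary canonical hypergroup operation, this coset equals $f(t,\phi(I),0^{(m-2)})$ for any $t$ in $f(t_1^{i-1},0,t_{i+1}^m)$. Hence we may reduce to single representatives, which is exactly the passage the proof of Theorem \ref{weak} performs with the sets $f(t_{j1}^{j(i-1)},0,t_{j(i+1)}^{jm})$.

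($\Longleftarrow$) Let $g(t_1^{kn-k+1})\in I-\phi(I)$. Then $g(\bar t_1^{\,kn-k+1})=\overline{g(t_1^{kn-k+1})}$ is a nonzero element of $I/\phi(I)$. The weak property gives $g(\bar t_1^{\,(k-1)n-k+2})\in I/\phi(I)$, or another product of $(k-1)n-k+2$ of the $\bar t_i$ lies in $\delta(I)/\phi(I)$. By the membership equivalences above, the corresponding $g$-product of the $t_i$ lies in $I$ or in $\delta(I)$, as required.

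The main obstacle is the bookkeeping in the first step. We must justify that the iterated $g$-product of cosets is again the single coset of the iterated product of representatives, and that membership of a coset in $I/\phi(I)$ or $\delta(I)/\phi(I)$ is equivalent to membership of its representative. I would prove this once as a small lemma by induction on the number of $g$-applications, using axiom (3) and absorption $g(\ldots,\phi(I),\ldots)\subseteq\phi(I)$. After that, both directions reduce to the definitions exactly as in Theorem \ref{weak}.
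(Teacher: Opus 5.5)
Your overall route is the paper's. There, Theorem \ref{madar} is proved by repeating the argument of Theorem \ref{weak} with $kn-k+1$ factors, transporting membership in $I-\phi(I)$, $I$ and $\delta(I)$ through $t\mapsto f(t,\phi(I),0^{(m-2)})$. Your ($\Longleftarrow$) direction is literally that argument.

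The difference is in ($\Longrightarrow$). The paper does not pick single representatives. It keeps the sets $f(t_{j1}^{j(i-1)},0,t_{j(i+1)}^{jm})$, shows that their $g$-product lies in $I-\phi(I)$, and then applies the standing ``strongly'' hypothesis to these sets. So your remark that Theorem \ref{weak} ``performs exactly'' your reduction is inaccurate. Conversely, once you reduce to single representatives you never use the strong hypothesis, even though you say you keep it.

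The reduction therefore carries the weight, and your justification for it does not work. Associativity, commutativity and the scalar identity only give
\[
f(t_1^{i-1},\phi(I),t_{i+1}^m)=\bigcup_{t\in f(t_1^{i-1},0,t_{i+1}^m)} f(t,\phi(I),0^{(m-2)}),
\]
a union of cosets, which for $m\geq 3$ need not be a single coset. For example, take the $(3,2)$-hyperring obtained from Example \ref{exa2} by $f(x,y,z)=x\oplus y\oplus z$, with $\phi(I)=\{0\}$ (e.g.\ $\phi=\phi_0$). Then $f(0.5,0.5,\{0\})=[0,0.5]$, while $f(t,\{0\},0)=\{t\}$ for every $t$.

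What actually licenses your step is the paper's assertion that $\pi(t)=f(t,\phi(I),0^{(m-2)})$ is a surjective homomorphism onto $G/\phi(I)$. That is, the elements of $G/\phi(I)$ are the single-representative cosets. You also need the partition property: if $t\in f(s,\phi(I),0^{(m-2)})$ then $f(t,\phi(I),0^{(m-2)})=f(s,\phi(I),0^{(m-2)})$, which follows from associativity and reversibility. Cite these instead of the canonical-hypergroup axioms. Then every element of $G/\phi(I)$ is some $\bar t$, and your bookkeeping lemma on iterated products and memberships goes through. Your proof becomes a cleaner version of the paper's, and it shows that the ``strongly'' assumption is superfluous for Theorem \ref{madar} under this reading of $G/\phi(I)$.
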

\begin{proof}
It can be easily proved in a similar manner to the proof of Theorem \ref{weak}.
\end{proof}
Let $I$ be a 
weakly $(k,n)$-absorbing primary 
 hyperideal of $G$. Recall from \cite{weak} that $(t_1^{kn-k+1})$ is called a $(k,n)$-zero primary element  of $I$ for $t_1^{kn-k+1} \in G$, if $g(t_1^{kn-k+1})=0$, $g(t_1^{(k-1)n-k+2}) \notin I$ and none of $g$-product of $(k-1)n-k+2$ of $t_i^,$s , other than $g(t_1^{(k-1)n-k+2})$, is in ${\bf r} (I)$. Now, suppose that $I$ is a weakly $(k,n)$-absorbing $\delta$-primary hyperideal of $G$. We say that $(t_1^{kn-k+1})$ is a $(k,n)$-zero $\delta$-primary element of $I$ for $t_1^{kn-k+1} \in G$, if $g(t_1^{kn-k+1})=0$, $g(t_1^{(k-1)n-k+2}) \notin I$ and none of $g$-product of $(k-1)n-k+2$ of $t_i^,$s , other than $g(t_1^{(k-1)n-k+2})$, is in $\delta(I)$. Moreover, let $I$ be a $\phi$-$(k,n)$-absorbing $\delta$-primary hyperideal of $G$. If  $g(t_1^{kn-k+1}) \in \phi(I)$, $g(t_1^{(k-1)n-k+2}) \notin I$ and none of $g$-product of $(k-1)n-k+2$ of $t_i^,$s , other than $g(t_1^{(k-1)n-k+2})$, is in $\delta(I)$, then $(t_1^{kn-k+1})$ is called a $\phi$-$(k,n)$-$\delta$-primary element of $I$.
\begin{theorem} \label{weak2}
Let $I$ be a  strongly $\phi$-$(2,2)$-absorbing $\delta$-primary hyperideal of $G$ and   $t_1^3\in G$. Then we have the following 
equivalent statements:
\begin{itemize}
\item[\rm(i)]~ $(t_1^3)$ is a $\phi$-$(2,2)$-$\delta$-primary element of $I$.
\item[\rm(ii)]~$(f(t_1,\phi(I),0^{(m-2)}),f(t_2,\phi(I),0^{(m-2)}),f(t_3,\phi(I),0^{(m-2)}))$ is a $(2,2)$-zero $\delta_q$-primary element of $I/\phi(I)$.
\end{itemize}
\end{theorem}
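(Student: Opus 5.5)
The plan is to translate each of the three defining conditions of a $\phi$-$(2,2)$-$\delta$-primary element across the canonical map $\pi: G \to G/\phi(I)$, $t \mapsto f(t,\phi(I),0^{(m-2)})$. Everything runs through one dictionary. In $G/\phi(I)$ the zero is $\phi(I)$ itself, and $\pi$ respects $g$, so
$$g(\pi(t_1),\pi(t_2),\pi(t_3)) = f(g(t_1^3),\phi(I),0^{(m-2)}),$$
and similarly for products of two elements. This is the distributivity computation already used in the proof of Theorem \ref{weak}. Moreover, since $\phi(I) \subseteq I \subseteq \delta(I)$ are hyperideals, for $x \in G$ we have $\pi(x) \in I/\phi(I)$ iff $x \in I$, and $\pi(x) \in \delta(I)/\phi(I) = \delta_q(I/\phi(I))$ iff $x \in \delta(I)$. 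I will state this dictionary first as a small preliminary observation and verify it once.

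For (i)$\Rightarrow$(ii), assume $g(t_1^3) \in \phi(I)$, $g(t_1^2) \notin I$, and $g(t_2^3), g(t_1,t_3) \notin \delta(I)$. From $g(t_1^3) \in \phi(I)$ we get $f(g(t_1^3),\phi(I),0^{(m-2)}) = \phi(I)$, i.e. the triple product of the images is the zero of $G/\phi(I)$. The dictionary turns $g(t_1^2) \notin I$ into $g(\pi(t_1),\pi(t_2)) \notin I/\phi(I)$, and the two $\delta$-conditions into statements that the corresponding products are not in $\delta_q(I/\phi(I))$. I also need the standing hypothesis that $I/\phi(I)$ is weakly $(2,2)$-absorbing $\delta_q$-primary, so that the notion of a zero $\delta_q$-primary element makes sense. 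This comes from Theorem \ref{weak}, since $I$ is strongly $\phi$-$(2,2)$-absorbing $\delta$-primary and hence $\phi$-$(2,2)$-absorbing $\delta$-primary. These facts together are exactly (ii).

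For (ii)$\Rightarrow$(i) I read the same equivalences backwards. The triple product of the images equals $\phi(I)$, so $g(t_1^3) \in \phi(I)$. Non-membership of the pairwise products in $I/\phi(I)$ and in $\delta_q(I/\phi(I))$ gives $g(t_1^2) \notin I$ and $g(t_2^3), g(t_1,t_3) \notin \delta(I)$.

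The main obstacle is the dictionary step, specifically the claim that $f(x,\phi(I),0^{(m-2)}) = \phi(I)$ forces $x \in \phi(I)$, and the analogous membership criteria for $I$ and $\delta(I)$. I would handle this with the canonical hypergroup axioms. Since $0 \in \phi(I)$, we have $x \in f(x,0,0^{(m-2)}) \subseteq f(x,\phi(I),0^{(m-2)})$, which gives one direction. Conversely, if $x \in J$ for a hyperideal $J \supseteq \phi(I)$, then $f(x,\phi(I),0^{(m-2)}) \subseteq J$, so the coset lies in $J/\phi(I)$. Cosets either coincide or are disjoint, which gives the other direction.
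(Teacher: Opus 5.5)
Your proposal is correct and follows the paper's proof. Both directions simply transport the four conditions $g(t_1^3)\in\phi(I)$, $g(t_1^2)\notin I$, $g(t_2^3)\notin\delta(I)$ and $g(t_1,t_3)\notin\delta(I)$ across $t\mapsto f(t,\phi(I),0^{(m-2)})$, using $\delta_q(I/\phi(I))=\delta(I)/\phi(I)$. You additionally verify the coset-membership dictionary and note that Theorem~\ref{weak} supplies the weakly $\delta_q$-primary property of $I/\phi(I)$, both of which the paper uses silently, and you correctly write $\phi(I)$ where the paper's proof has the typo $f(g(t_1^2),I,0^{(m-2)})$.
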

\begin{proof}
(i) $\Longrightarrow $ (ii) Let $(t_1^3)$ is a $\phi$-$(2,2)$-$\delta$-primary element of $I$. This implies that $g(t_1^3) \in \phi(I)$, $g(t_1^2) \notin I$, $g(t_2^3) \notin \delta(I)$ and $g(t_1,t_3) \notin \delta(I)$. It follows that  $f(g(t_1^2),I,0^{(m-2)}) \notin I/\phi(I)$ and $f(g(t_2^3),\phi(I),0^{(m-2)}) $, $f(g(t_1,t_3),\phi(I),0^{(m-2)}) \notin \delta(I)/\phi(I)$. Then we get  $(f(t_1,\phi(I),0^{(m-2)}),f(t_2,\phi(I),0^{(m-2)}),f(t_3,\phi(I),0^{(m-2)}))$ is a $(2,2)$-zero $\delta_q$-primary element of $I/\phi(I)$.

(i)  $\Longleftarrow $ (ii) Suppose that $(f(t_1,\phi(I),0^{(m-2)}),f(t_2,\phi(I),0^{(m-2)}),f(t_3,\phi(I),0^{(m-2)})$ is a $(2,2)$-zero $\delta_q$-primary of $I/\phi(I)$. Thus $g(t_1^3) \in \phi(I)$ but $f(g(t_1^2),I,0^{(m-2)}) \notin I/\phi(I)$, $f(g(t_2^3),\phi(I),0^{(m-2)}) \notin \delta(I)/\phi(I)$ and $f(g(t_1,t_3),\phi(I),0^{(m-2)}) \notin \delta(I)/\phi(I)$. Hence $g(t_1^3) \in \phi(I)$, $g(t_1^2) \notin I$, $g(t_2^3) \notin \delta(I)$ and $g(t_1,t_3) \notin \delta(I)$. It implies that $(t_1^3)$ is a $\phi$-$(2,2)$-$\delta$-primary of $I$.
\end{proof}
\begin{theorem} \label{kweak2}
 Let $I$ be a strongly $\phi$-$(k,n)$-absorbing $\delta$-primary hyperideal of $G$ and  $t_1^{kn-k+1} \in G$. Then the followings are equivalent:
\begin{itemize} 
\item[\rm(i)]~ $(t_1^{kn-k+1})$ is a $\phi$-$(k,n)$-$\delta$-primary of $I$.
\item[\rm(ii)]~$(f(t_1,\phi(I),0^{(m-2)}),\cdots,f(t_{kn-k+1},\phi(I),0^{(m-2)})$ is a $(k,n)$-zero $\delta$-primary of $I/\phi(I)$.
\end{itemize}
\end{theorem}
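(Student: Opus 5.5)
The plan is to generalize the proof of Theorem \ref{weak2} directly, working with the cosets $\bar t_i=f(t_i,\phi(I),0^{(m-2)})$ in the quotient $G/\phi(I)$. Its zero is $\phi(I)$ and $\delta_q(I/\phi(I))=\delta(I)/\phi(I)$, as in the proof of Theorem \ref{weak}. The first step is to record the basic computation coming from distributivity (condition (3)) and $g(\ldots,0,\ldots)=0$. For any $j$ and indices $i_1,\ldots,i_j$, the $g$-product (or $g_{(l)}$-product) of $\bar t_{i_1},\ldots,\bar t_{i_j}$ in $G/\phi(I)$ equals $f(g(t_{i_1},\ldots,t_{i_j}),\phi(I),0^{(m-2)})$. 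We apply this with $j=kn-k+1$ and $j=(k-1)n-k+2$, padding with $1_G$ where the arity requires it. The expansion produces only terms with a factor in $\phi(I)$, and those are absorbed into $\phi(I)$ because it is a hyperideal.

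The second step is the coset membership criterion. For $x\in G$ and a hyperideal $J\supseteq\phi(I)$, we have $f(x,\phi(I),0^{(m-2)})\in J/\phi(I)$ if and only if $x\in J$. Also, $f(x,\phi(I),0^{(m-2)})=\phi(I)$ (the zero of the quotient) if and only if $x\in\phi(I)$. Both follow from the canonical hypergroup axioms: the coset containing $x$ is contained in $J$ exactly when $x\in J$, using reversibility and the fact that $J$ is closed under $f$. This is used with $J=I$ and $J=\delta(I)$; note $\phi(I)\subseteq I\subseteq\delta(I)$.

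With these two facts both directions are pure translation. For (i)$\Rightarrow$(ii), the hypotheses $g(t_1^{kn-k+1})\in\phi(I)$, $g(t_1^{(k-1)n-k+2})\notin I$, and that no other $g$-product of $(k-1)n-k+2$ of the $t_i$'s lies in $\delta(I)$ become, respectively:
\begin{itemize}
\item[\rm(a)]~ the product of the $\bar t_i$ is the zero $\phi(I)$;
\item[\rm(b)]~ the product of the first $(k-1)n-k+2$ cosets is not in $I/\phi(I)$;
\item[\rm(c)]~ no other such product of cosets is in $\delta_q(I/\phi(I))$.
\end{itemize}
Together these are exactly the definition of a $(k,n)$-zero $\delta_q$-primary element. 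Here Theorem \ref{madar} guarantees that $I/\phi(I)$ is weakly $(k,n)$-absorbing $\delta_q$-primary, so the notion is defined in the quotient. For (ii)$\Rightarrow$(i), read the same equivalences backwards. $I$ is $\phi$-$(k,n)$-absorbing $\delta$-primary by assumption, which again makes the notion of a $\phi$-$(k,n)$-$\delta$-primary element meaningful.

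The main obstacle is the bookkeeping in the first step: checking that the multi-fold distributive expansion really collapses to the single coset $f(g(\ldots),\phi(I),0^{(m-2)})$. For $g_{(l)}$-products this needs an induction on $l$, expanding one argument at a time and absorbing every term containing an element of $\phi(I)$.
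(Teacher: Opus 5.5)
Your proposal is correct and takes essentially the same route as the paper. The paper proves this by the same condition-by-condition translation used for Theorem \ref{weak2}: the conditions $g(t_1^{kn-k+1})\in\phi(I)$, $g(t_1^{(k-1)n-k+2})\notin I$, and that no other product lies in $\delta(I)$, are matched with their coset counterparts in $G/\phi(I)$. You additionally make explicit the two facts the paper uses silently: the product of the cosets $f(t_i,\phi(I),0^{(m-2)})$ is the coset $f(g(\ldots),\phi(I),0^{(m-2)})$ of the product, and $f(x,\phi(I),0^{(m-2)})\in J/\phi(I)$ if and only if $x\in J$ for hyperideals $J\supseteq\phi(I)$.
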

\begin{proof}
It is seen to be true in a similar manner to Theorem \ref{weak2}.
\end{proof}
The following lemma is needed in the proof of Theorem \ref{weak4}.
\begin{lem} \label{madar2}
Assume that $I$ is a strongly weakly $(k,n)$-absorbing $\delta$-primary hyperideal of $G$. If $(t_1^{kn-k+1})$ is a $(k,n)$-zero $\delta$-primary of $I$ for some $t_1^{kn-k+1} \in G$, then for every $i_1,\ldots,i_r \in \{1,\ldots,kn-k+1\}$ and $1 \leq r \leq (k-1)n-k+2$, \[g(t_1,\ldots,\widehat{t_{i_1}},\ldots,\widehat{t_{i_2}},\ldots,\widehat{t_{i_r}},\cdots,t_{kn-k+1},I^{(r)}) =0.\] 
\end{lem}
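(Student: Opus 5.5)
We argue by contradiction on $r$, following the standard pattern for weakly $(k,n)$-absorbing primary hyperideals (Davvaz et al.). Here ``strongly'' means the defining condition is assumed for hyperideals: if $0 \neq g(I_1^{kn-k+1}) \subseteq I$, then $g(I_1^{(k-1)n-k+2}) \subseteq I$, or some other $g$-product of $(k-1)n-k+2$ of the $I_i$'s lies in $\delta(I)$. We use this together with the elementwise condition. Fix a $(k,n)$-zero $\delta$-primary element $(t_1^{kn-k+1})$. Thus $g(t_1^{kn-k+1})=0$, $g(t_1^{(k-1)n-k+2})\notin I$, and no other $g$-product of $(k-1)n-k+2$ of the $t_i$'s lies in $\delta(I)$.

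\emph{Case $r=1$.} Suppose $g(t_1,\ldots,\widehat{t_{i}},\ldots,t_{kn-k+1},a)\neq 0$ for some $a\in I$, with $i$ chosen so the product is nonzero. The key device is to look at the element $f(t_i,a,0^{(m-2)})$. By distributivity, $g(t_1,\ldots,f(t_i,a,0^{(m-2)}),\ldots,t_{kn-k+1})$ equals
\[f\bigl(g(t_1^{kn-k+1}),\,g(t_1,\ldots,\widehat{t_i},\ldots,t_{kn-k+1},a),\,0^{(m-2)}\bigr)=f\bigl(0,\,g(\ldots,a),\,0^{(m-2)}\bigr)=g(\ldots,a),\]
which is nonzero and lies in $I$. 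Applying the weakly $\delta$-primary condition to the tuple with $t_i$ replaced by any element $s\in f(t_i,a,0^{(m-2)})$ gives one of the following.
\begin{itemize}
\item[(a)] $g(t_1^{(k-1)n-k+2})$ with $t_i\mapsto s$ lies in $I$. If $i\le (k-1)n-k+2$, subtracting the $a$-term (which lies in $I$, since $I$ is a hyperideal) forces $g(t_1^{(k-1)n-k+2})\in I$, a contradiction.
\item[(b)] Another $g$-product of $(k-1)n-k+2$ of the modified tuple lies in $\delta(I)$. If that product involves $s$, subtract the $a$-term, which lies in $I\subseteq\delta(I)$. This shows the corresponding product of the original $t_i$'s lies in $\delta(I)$, a contradiction. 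If it does not involve $s$, the product is literally one of the original ones, again a contradiction.
\end{itemize}
Subtracting uses the canonical hypergroup property: from $x\in f(y,a,\ldots)$ we get $y\in f(x,-a,\ldots)$.

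\emph{Induction on $r$.} Suppose the claim holds for $r-1$, and $g(t_1,\ldots,\widehat{t_{i_1}},\ldots,\widehat{t_{i_r}},\ldots,a_1,\ldots,a_r)\neq0$ with $a_j\in I$. Replace each removed $t_{i_j}$ by $f(t_{i_j},a_j,0^{(m-2)})$ and expand the $g$-product by distributivity. Every mixed term, one in which some but not all of the $t_{i_j}$ are replaced by the $a_j$, is zero by the induction hypothesis; the term with no replacements is $g(t_1^{kn-k+1})=0$. Hence the whole product equals the single term with all $r$ replacements, which is nonzero and lies in $I$. Repeat the argument of the case $r=1$: each alternative produced by the weakly condition, after subtracting terms that lie in $I\subseteq\delta(I)$, yields a forbidden membership for the original tuple.

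\emph{Role of strongness and main obstacle.} The strong (hyperideal) form lets us apply the conclusion to the sets $g(\ldots,I^{(r)})$ as a whole: if the set is not $\{0\}$, pick a nonzero element and argue as above. The restriction $r\le (k-1)n-k+2$ guarantees that enough original $t$'s survive for the resulting products to be meaningful. The main obstacle is the bookkeeping in the inductive step. We must verify that every mixed term is covered by the induction hypothesis, which needs fewer than $r$ copies of $I$ and hence the ordering of the induction. We must also check that the ``subtract the $I$-part'' step legitimately returns to the original products, both for the distinguished product $g(t_1^{(k-1)n-k+2})$ (which must land in $I$) and for the others (which must land in $\delta(I)$). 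I would handle this by always separating the expansion into an $I$-term plus the pure $t$-term.
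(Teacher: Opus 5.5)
Your proposal is correct and follows essentially the same route the paper defers to, namely the induction on $r$ from Theorem 4.9 of Davvaz et al.: you replace each removed $t_{i_j}$ by an element of $f(t_{i_j},a_j,0^{(m-2)})$, kill the mixed terms with the induction hypothesis, and undo the replacement using reversibility in the canonical hypergroup. Two side remarks: your argument uses only the elementwise weakly condition (strongness plays no role) together with induction over all $r'<r$, and it never uses the bound $r\le (k-1)n-k+2$, so it runs unchanged up to $r=kn-k+1$, which is exactly how Lemma \ref{tangsiri} is obtained.
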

\begin{proof}
It can be easily seen that the claim is true in a similar manner to the proof of
Theorem 4.9 in \cite{weak}. 
\end{proof}

\begin{theorem} \label{weak4}
 Let $I$ be a strongly $\phi$-$(k,n)$-absorbing $\delta$-primary hyperideal of $G$ and $(t_1^{kn-k+1})$ be a $\phi$-$(k,n)$-$\delta$-primary of $I$ for some $t_1^{kn-k+1} \in G$. Then  for every $i_1,\ldots,i_r \in \{1,\ldots,kn-k+1\}$ and $1 \leq r \leq (k-1)n-k+2$,
 \[g(t_1,\ldots,\widehat{t_{i_1}},\ldots,\widehat{t_{i_2}},\ldots,\widehat{t_{i_r}},\ldots,t_{kn-k+1},I^{(r)}) \subseteq \phi(I).\]
\end{theorem}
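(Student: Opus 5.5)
The plan is to reduce to the weakly case through the quotient $G/\phi(I)$, using Theorem \ref{madar}, Theorem \ref{kweak2} and Lemma \ref{madar2}. For each $t\in G$ write $\bar t=f(t,\phi(I),0^{(m-2)})$ for its class in $G/\phi(I)$. In the quotient the zero is $\phi(I)$, and $g$ is computed representative-wise. This is the same distributivity calculation as in the proof of Theorem \ref{weak}: we have $g(\bar s_1,\ldots,\bar s_n)=f(g(s_1^n),\phi(I),0^{(m-2)})$.

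\textbf{Step 1: pass to the quotient.} Since $I$ is a strongly $\phi$-$(k,n)$-absorbing $\delta$-primary hyperideal, Theorem \ref{madar} (in its strong form, as in the proof of Theorem \ref{weak}) shows that $I/\phi(I)$ is a strongly weakly $(k,n)$-absorbing $\delta_q$-primary hyperideal of $G/\phi(I)$, where $\delta_q(I/\phi(I))=\delta(I)/\phi(I)$. The hypothesis says $(t_1^{kn-k+1})$ is a $\phi$-$(k,n)$-$\delta$-primary element of $I$. By Theorem \ref{kweak2}, $(\bar t_1,\ldots,\bar t_{kn-k+1})$ is then a $(k,n)$-zero $\delta_q$-primary element of $I/\phi(I)$.

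\textbf{Step 2: apply Lemma \ref{madar2}.} Applying Lemma \ref{madar2} to $I/\phi(I)$ in $G/\phi(I)$ gives, for every choice of indices $i_1,\ldots,i_r$ with $1\le r\le (k-1)n-k+2$,
\[g(\bar t_1,\ldots,\widehat{\bar t_{i_1}},\ldots,\widehat{\bar t_{i_r}},\ldots,\bar t_{kn-k+1},(I/\phi(I))^{(r)})=\phi(I).\]

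\textbf{Step 3: pull back to $G$.} Take $a_1,\ldots,a_r\in I$ and any element
\[x\in g(t_1,\ldots,\widehat{t_{i_1}},\ldots,\widehat{t_{i_r}},\ldots,t_{kn-k+1},a_1,\ldots,a_r).\]
The classes $\bar a_j$ lie in $I/\phi(I)$. By the representative-wise formula for $g$, the class of $x$ equals the left side of the displayed identity evaluated at $\bar a_1,\ldots,\bar a_r$, so $f(x,\phi(I),0^{(m-2)})=\phi(I)$. Since $\phi(I)$ is a hyperideal and $0\in\phi(I)$, we have $x\in f(x,\phi(I),0^{(m-2)})=\phi(I)$. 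Hence $x\in\phi(I)$, which gives the claimed inclusion in $\phi(I)$.

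\textbf{Main obstacle.} The delicate point is Step 1. Theorem \ref{madar} and Theorem \ref{kweak2} are stated for the ordinary (element-wise) notions, but Lemma \ref{madar2} needs the \emph{strongly} weakly property, i.e. the statement for hyperideals of $G/\phi(I)$. I would verify this by the correspondence between hyperideals of $G/\phi(I)$ and hyperideals of $G$ containing $\phi(I)$. Concretely, suppose $g(J_1/\phi(I),\ldots)\subseteq I/\phi(I)$ with the product nonzero. Lifting gives $g(J_1,\ldots)\subseteq I$ with the product not contained in $\phi(I)$. The strong hypothesis on $I$ is then applied to the lifted hyperideals.

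If the gap between the product not being contained in $\phi(I)$ and the hypothesis "$\subseteq I-\phi(I)$" is troublesome, I would fall back on a direct argument. That argument mimics Theorem 4.9 of \cite{weak}, by induction on $r$, using the element-wise condition on tuples in which one $t_{i}$ is replaced by $f(t_i,a,\ldots)$ with $a\in I$.
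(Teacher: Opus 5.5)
Your proposal is the paper's proof. You pass to $G/\phi(I)$ using Theorem \ref{madar} and Theorem \ref{kweak2}, apply Lemma \ref{madar2} to the resulting $(k,n)$-zero $\delta_q$-primary element, and pull back; your Step 3 just spells out the paper's final ``consequently'' via $x\in f(x,\phi(I),0^{(m-2)})=\phi(I)$. The strong-form transfer you flag as the main obstacle is exactly what the paper takes from Theorem \ref{madar} without further argument, so your argument is as complete as the paper's on that point.
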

\begin{proof}
Assume that $I$ is  a strongly $\phi$-$(k,n)$-absorbing $\delta$-primary hyperideal of $G$ and $(t_1^{kn-k+1})$ is a $\phi$-$(k,n)$-$\delta$-primary of $I$ for  $t_1^{kn-k+1} \in G$. Then we conclude that $I/\phi(I)$ is a strongly weakly $(k,n)$-absorbing $\delta_q$-primary hyperideal of $G/\phi(I)$ by Theorem \ref{madar} and   $(f(t_1,\phi(I),0^{(m-2)}),\ldots,f(t_{kn-k+1},\phi(I),0^{(m-2)}))$ is a $(k,n)$-zero $\delta_q$-primary of $I/\phi(I)$ by Theorem \ref{kweak2}.  Therefore for every $i_1,\ldots,i_r \in \{1,\ldots,kn-k+1\}$ and $1 \leq r \leq (k-1)n-k+2$, 

$f(g(f(t_1,\phi(I),0^{(m-2)}),\ldots,f(\widehat{t_{i_1},\phi(I),0^{(m-2)}}),\ldots,f(\widehat{t_{i_2},\phi(I),0^{(m-2)}}),\ldots,$

$\hspace{0.1cm}f(\widehat{t_{i_r},\phi(I),0^{(m-2)}}),\ldots,
 f(t_{kn-k+1},\phi(I),0^{(m-2)}),I^{(r)}),\phi(I),0^{(m-2)})= \phi(I)$\\
 by Lemma \ref{madar2}. Consequently, $g(t_1,\ldots,\widehat{t_{i_1}},\ldots,\widehat{t_{i_2}},\ldots,\widehat{t_{i_r}},\ldots,t_{kn-k+1},I^{(r)}) \subseteq \phi(I).$
\end{proof}
The following lemma is needed in the proof of our next result.
\begin{lemma} \label{tangsiri}
Assume that $I$ is a strongly weakly $(k,n)$-absorbing $\delta$-primary hyperideal of $G$. If $I$ has a $(k,n)$-zero $\delta$-primary element, then $g(I^{(kn-k+1)})=0$.
\end{lemma}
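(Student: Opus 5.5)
The plan is to follow the classical argument from the ring case (Anderson--Badawi type, as in Theorem 4.10 of \cite{weak}), with Lemma \ref{madar2} as the main tool. Let $(t_1^{kn-k+1})$ be a $(k,n)$-zero $\delta$-primary element of $I$. Then $g(t_1^{kn-k+1})=0$, $g(t_1^{(k-1)n-k+2})\notin I$, and no $g$-product of $(k-1)n-k+2$ of the $t_i$'s other than $g(t_1^{(k-1)n-k+2})$ lies in $\delta(I)$. Take arbitrary $a_1^{kn-k+1}\in I$. It suffices to show $g(a_1^{kn-k+1})=0$; then $g(I^{(kn-k+1)})=0$, since $g(I^{(kn-k+1)})$ is the union of such products.

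The key object is the mixed product
\[
g\big(f(t_1,a_1,0^{(m-2)}),\ldots,f(t_{kn-k+1},a_{kn-k+1},0^{(m-2)})\big).
\]
By the distributive law (3), expand it as $f$ of all products $g(s_1^{kn-k+1})$ with each $s_i\in\{t_i,a_i\}$ (together with zeros). There are three kinds of terms.
\begin{itemize}
\item[\rm(a)] The term with all $s_i=t_i$ is $0$.
\item[\rm(b)] The terms with $r$ entries replaced by $a_i$'s, for $1\le r\le (k-1)n-k+2$, lie in $g(t_1,\ldots,\widehat{t_{i_1}},\ldots,\widehat{t_{i_r}},\ldots,t_{kn-k+1},I^{(r)})=0$ by Lemma \ref{madar2}.
\item[\rm(c)] The remaining terms have more than $(k-1)n-k+2$ entries from $I$, so at most $k-2$ of the $t_i$'s survive. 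For these I would use the strong hypothesis at the level of hyperideals to show they also vanish. The route is to reduce to a smaller index set and reapply the lemma's argument: replace one additional $t_i$ by an element of $I$, absorb the extra entries into a single $g$-product, which is still in $I$, and use Lemma \ref{madar2} with $r=(k-1)n-k+2$ after grouping. Alternatively, note that $(k-1)n-k+2$ counts exactly the slots to be filled, and products with more $I$-entries are contained in products with exactly $(k-1)n-k+2$ entries from $I$, because $g$ of elements of $I$ lies in $I$.
\end{itemize}
So every term except $g(a_1^{kn-k+1})$ is $0$, and the whole expansion reduces to $g(a_1^{kn-k+1})$ (canonical hypergroup, $0$ the scalar identity of $f$).

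Next, suppose $g(a_1^{kn-k+1})\ne 0$. Then the mixed product is a nonzero element of $I$. Apply the weakly $(k,n)$-absorbing $\delta$-primary property to the elements $x_i\in f(t_i,a_i,0^{(m-2)})$. This gives either $g(x_1^{(k-1)n-k+2})\in I$, or some other $g$-product of $(k-1)n-k+2$ of the $x_i$'s lies in $\delta(I)$. In the first case $g(t_1^{(k-1)n-k+2})\in f\big(g(x_1^{(k-1)n-k+2}),\ \text{terms in } I\big)\subseteq I$, using reversibility in the canonical hypergroup. This contradicts $g(t_1^{(k-1)n-k+2})\notin I$. In the second case the same manipulation with $I\subseteq\delta(I)$ puts the corresponding $t$-product in $\delta(I)$, which is again a contradiction. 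Hence $g(a_1^{kn-k+1})=0$.

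The main obstacle is step (c), the terms with more than $(k-1)n-k+2$ entries from $I$. The grouping argument has to be checked carefully against the $n$-ary associativity of $g$. A secondary obstacle is handling the hyperoperation $f$ rigorously: one has to pass from the set-valued mixed product to single elements $x_i$ with the required memberships. Here I would invoke the strong (hyperideal-level) form of the hypothesis, applied to the hyperideals generated by the $t_i$ and by $I$, to avoid element-chasing.
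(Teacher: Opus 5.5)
Your overall strategy is the one the paper intends: the perturbation/expansion argument of Theorem 4.10 in \cite{weak}, built on Lemma \ref{madar2}. Your final paragraph is sound. You pick $x_i\in f(t_i,a_i,0^{(m-2)})$, apply the weak condition, and use reversibility to force $g(t_1^{(k-1)n-k+2})$ into $I$ or another $t$-subproduct into $\delta(I)$. Once all other terms vanish, the expanded set is the singleton $\{g(a_1^{kn-k+1})\}$, so no hyperideal-level ``strong'' hypothesis is needed at that point.

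The genuine gap is step (c). A term with $s>(k-1)n-k+2$ factors from $I$ contains at most $n-2$ of the $t_i$'s, not $k-2$. For $n\ge 3$, such terms with $s<kn-k+1$ really occur. Lemma \ref{madar2} as stated only controls $(kn-k+1)$-fold products that still contain at least $n-1$ of the original $t_i$'s. Neither of your patches reaches these terms:
\begin{itemize}
\item Merging several $I$-factors into one $g$-product lowers the number of factors by $n-1$ each time and never increases the number of $t$-factors. Padding with $1$'s restores the arity, but the new factors are $1$, not the required $t_j$. Either way you never land in the scope of Lemma \ref{madar2}.
\item The claimed inclusion of products with more $I$-factors into products with exactly $(k-1)n-k+2$ of them is false. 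An element of $I$ is not a multiple of the $t_j$ it replaces; already in a ring, $aI^2\subseteq abI$ fails in general.
\end{itemize}

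The missing ingredient is to prove the vanishing for every number of replaced factors. Show by induction on $r$, $1\le r\le kn-k$, that $g(t_1,\ldots,\widehat{t_{i_1}},\ldots,\widehat{t_{i_r}},\ldots,t_{kn-k+1},I^{(r)})=0$. The inductive step is exactly your final paragraph with only the positions $i_1,\ldots,i_r$ perturbed. Suppose such a product with entries $x_{i_1},\ldots,x_{i_r}\in I$ were nonzero. In the set obtained by replacing each $t_{i_j}$ with $f(t_{i_j},x_{i_j},0^{(m-2)})$, every term with fewer than $r$ factors from $I$ vanishes, by $g(t_1^{kn-k+1})=0$ and the induction hypothesis. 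So that set is the singleton consisting of this nonzero element of $I$. The weak condition plus reversibility then gives $g(t_1^{(k-1)n-k+2})\in I$ or some other $t$-subproduct in $\delta(I)$, a contradiction.

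Nothing in this step uses $r\le (k-1)n-k+2$. With the full range available, all cross terms in your expansion vanish, and the case $r=kn-k+1$ is precisely the lemma. For $n=2$ one has $(k-1)n-k+2=kn-k$, which is why the restricted range suffices in the ring-like case and your argument is already complete there.
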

\begin{proof}
The proof can be  obtained in a similar manner to Theorem 4.10 in \cite{weak}.
\end{proof}
The following theorem shows that a strongly $\phi$-$(k,n)$-absorbing $\delta$-primary hyperideal $I$ of $G$  is  a $(k,n)$-absorbing $\delta$-primary hyperideal of $G$ when $g(I^{(kn-k-1)}) \nsubseteq \phi(I)$.
\begin{theorem} \label{madar3}
 If $I$ is a strongly $\phi$-$(k,n)$-absorbing $\delta$-primary hyperideal of $G$ such that $g(I^{(kn-k-1)}) \nsubseteq \phi(I)$, then $I$ is  a $(k,n)$-absorbing $\delta$-primary hyperideal of $G$.
\end{theorem}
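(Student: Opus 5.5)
We argue by contraposition, using Theorem \ref{weak4}. Suppose $I$ is a strongly $\phi$-$(k,n)$-absorbing $\delta$-primary hyperideal of $G$ that is not $(k,n)$-absorbing $\delta$-primary. Then there are $t_1^{kn-k+1}\in G$ with $g(t_1^{kn-k+1})\in I$, $g(t_1^{(k-1)n-k+2})\notin I$, and no $g$-product of $(k-1)n-k+2$ of the $t_i$'s, other than $g(t_1^{(k-1)n-k+2})$, lying in $\delta(I)$. Since $I$ is $\phi$-$(k,n)$-absorbing $\delta$-primary, we cannot have $g(t_1^{kn-k+1})\in I-\phi(I)$. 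Hence $g(t_1^{kn-k+1})\in\phi(I)$, so $(t_1^{kn-k+1})$ is a $\phi$-$(k,n)$-$\delta$-primary element of $I$. Theorem \ref{weak4} then gives
\[
g(t_1,\ldots,\widehat{t_{i_1}},\ldots,\widehat{t_{i_r}},\ldots,t_{kn-k+1},I^{(r)})\subseteq\phi(I)
\]
for all $1\le r\le (k-1)n-k+2$.

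The goal is to show $g(I^{(kn-k+1)})\subseteq\phi(I)$. Since $I$ is a hyperideal, $g(I^{(kn-k+1)})\subseteq g(I^{(kn-k-1)})$ (absorb two extra factors of $I$ into $G$), which would contradict the hypothesis $g(I^{(kn-k-1)})\nsubseteq\phi(I)$. I expect the intended argument is simply this containment combined with the conclusion of Lemma \ref{tangsiri} transported to $G$.

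To obtain $g(I^{(kn-k+1)})\subseteq\phi(I)$, I would pass to the quotient as in the proof of Theorem \ref{weak4}. By Theorem \ref{madar} and Theorem \ref{kweak2}, $I/\phi(I)$ is a strongly weakly $(k,n)$-absorbing $\delta_q$-primary hyperideal of $G/\phi(I)$ with the $(k,n)$-zero $\delta_q$-primary element $(f(t_i,\phi(I),0^{(m-2)}))_i$. Lemma \ref{tangsiri} then yields $g((I/\phi(I))^{(kn-k+1)})=\phi(I)$, the zero of $G/\phi(I)$. Pulling back through $x\mapsto f(x,\phi(I),0^{(m-2)})$ gives $g(I^{(kn-k+1)})\subseteq\phi(I)$.

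Alternatively, one can work directly in $G$. Take $x_1,\ldots,x_{kn-k+1}\in I$ and expand $g(f(t_1,x_1,0^{(m-2)}),\ldots)$ by distributivity. Each mixed term lies in $\phi(I)$ by the display above, and the pure-$t$ term lies in $\phi(I)$ as well. The strong hypothesis then forces the whole product into $\phi(I)$, since otherwise it lies in $I-\phi(I)$ and some $g$-product of the $f(t_i,x_i,\ldots)$ lands in $I$ or $\delta(I)$. That would push the corresponding $t$-product into $I$ or $\delta(I)$, contradicting the choice of $t_i$. From this one extracts $g(x_1^{kn-k+1})\in\phi(I)$.

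The main obstacle is this last extraction step, in either route. In hyperring arithmetic, membership of a hypersum in $\phi(I)$ together with the other summands lying in $\phi(I)$ must be converted into membership of the remaining summand, using the reversibility of the canonical hypergroup $(G,f)$. I would rely on Lemma \ref{tangsiri} to handle this.
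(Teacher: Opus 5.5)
Your first route (Theorems \ref{kweak2} and \ref{madar}, Lemma \ref{tangsiri}, then pulling back along $x\mapsto f(x,\phi(I),0^{(m-2)})$) is exactly the paper's argument, and it correctly gives $g(I^{(kn-k+1)})\subseteq\phi(I)$. \textbf{The gap is in your final step.} The inclusion $g(I^{(kn-k+1)})\subseteq g(I^{(kn-k-1)})$ is true, but it runs the wrong way for a contradiction. From $A\subseteq\phi(I)$ and $A\subseteq B$ nothing follows about $B$, so $g(I^{(kn-k-1)})\nsubseteq\phi(I)$ is not contradicted. For instance, with $k=n=2$ and $\phi=\phi_0$ you would have derived $I^3=0$, and you would be trying to contradict $I\neq 0$.

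No argument can close this gap, because the statement as printed is false. Take $G=F[X,Y]/(X,Y)^3$ with $F$ a field and $m=n=2$, and let $k=2$, $\phi=\phi_0$, $\delta=\delta_0$, $I=(X^2)=FX^2$.
\begin{itemize}
\item The maximal ideal cubes to zero, so any nonzero $t_1t_2t_3\in I$ has a unit factor. Hence the product of the other two factors lies in $I$.
\item For the same reason, if a product of three ideals is nonzero and contained in $I$, one of them is $G$. So $I$ is strongly $\phi_0$-$(2,2)$-absorbing $\delta_0$-primary.
\item But $Y\cdot Y\cdot Y=0\in I$ while $Y^2\notin I$, so $I$ is not $(2,2)$-absorbing $\delta_0$-primary.
\item Meanwhile $g(I^{(kn-k-1)})=I\nsubseteq\phi_0(I)$, so the hypothesis holds.
\end{itemize}

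The paper's own proof makes the same unjustified jump (``which means $g(I^{(kn-k-1)})\subseteq\phi(I)$''). The intended hypothesis is evidently $g(I^{(kn-k+1)})\nsubseteq\phi(I)$, which matches Corollary \ref{weak5}(i) and Example \ref{mansooreh}. With that hypothesis your first route is complete and coincides with the paper's. Your second, direct route adds nothing, since its extraction step again defers to Lemma \ref{tangsiri}.
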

\begin{proof}
Assume that $I$ is  not  a $(k,n)$-absorbing $\delta$-primary hyperideal of $G$. Then there exist $t_1^{kn-k+1} \in G$ such that $g(t_1^{kn-k+1}) \in \phi(I)$, $g(t_1^{(k-1)n-k+2}) \notin I$ and none of $g$-product of $(k-1)n-k+2$ of $t_i^,$s , other than $g(t_1^{(k-1)n-k+2})$, is in $\delta(I)$ which implies $(t_1^{kn-k+1})$ is  a $\phi$-$(kn-k+1)$-$\delta$-primary element of $I$. It follows that  $(f(t_1,\phi(I),0^{(m-2)}),\ldots,f(t_{kn-k+1},\phi(I),0^{(m-2)}))$ is a $(k,n)$-zero $\delta$-primary of $I/\phi(I)$ by Theorem \ref{kweak2}. Since $I$ is a strongly $\phi$-$(k,n)$-absorbing $\delta$-primary hyperideal of $G$, we conclude that $I/\phi(I)$ is a   strongly weakly $(k,n)$-absorbing $\delta_q$-primary hyperideal of $G/\phi(I)$ by Theorem \ref{madar}. Therefore  $f(g(I^{(kn-k+1)}),\phi(I),0^{(m-2)})=\phi(I)$ by Lemma \ref{tangsiri}, which means $g(I^{(kn-k-1)}) \subseteq \phi(I)$, a contradiction. Thus $I$ is  a $(k,n)$-absorbing $\delta$-primary hyperideal of $G$.
\end{proof}
In view of Theorem \ref{madar3}, we have the following result.
\begin{corollary} \label{weak5}
Let $I$ be a strongly $\phi$-$(k,n)$-absorbing $\delta$-primary hyperideal of $G$ but is not a $(k,n)$-absorbing $\delta$-primary. Then 
\begin{itemize} 
\item[\rm(i)]~$g(I^{(kn-k+1)}) \subseteq \phi(I)$.
\item[\rm(ii)]~${\bf r}(I)={\bf r}(\phi(I))$.
\end{itemize} 
\end{corollary}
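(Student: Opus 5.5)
For part (i) I will argue directly from the proof of Theorem \ref{madar3}. Since $I$ is not $(k,n)$-absorbing $\delta$-primary, there are $t_1^{kn-k+1} \in G$ with $g(t_1^{kn-k+1}) \in I$, $g(t_1^{(k-1)n-k+2}) \notin I$, and no other $g$-product of $(k-1)n-k+2$ of the $t_i$'s lying in $\delta(I)$. Because $I$ is $\phi$-$(k,n)$-absorbing $\delta$-primary, $g(t_1^{kn-k+1}) \notin I-\phi(I)$, so $g(t_1^{kn-k+1}) \in \phi(I)$. Hence $(t_1^{kn-k+1})$ is a $\phi$-$(k,n)$-$\delta$-primary element of $I$.

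By Theorem \ref{kweak2}, the tuple of cosets $f(t_i,\phi(I),0^{(m-2)})$ is then a $(k,n)$-zero $\delta_q$-primary element of $I/\phi(I)$. By Theorem \ref{madar}, $I/\phi(I)$ is strongly weakly $(k,n)$-absorbing $\delta_q$-primary. Lemma \ref{tangsiri} therefore gives that the $g$-product of $kn-k+1$ copies of $I/\phi(I)$ is the zero coset $\phi(I)$. Unwinding the quotient, $f(g(I^{(kn-k+1)}),\phi(I),0^{(m-2)})=\phi(I)$, which means $g(I^{(kn-k+1)}) \subseteq \phi(I)$. Equivalently, (i) is just the contrapositive of Theorem \ref{madar3}, read with the exponent $kn-k+1$ that its proof actually produces.

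For part (ii), the inclusion ${\bf r}(\phi(I)) \subseteq {\bf r}(I)$ is immediate from $\phi(I) \subseteq I$ and the monotonicity of the radical. For the reverse inclusion, I will show that every prime hyperideal $P \supseteq \phi(I)$ contains $I$. Such a $P$ contains $g(I^{(kn-k+1)})$ by (i). Write $kn-k+1=l(n-1)+1$ with $l=k$, so this product is the iterated product $g_{(k)}(I^{(kn-k+1)})$. Applying the primeness of $P$ repeatedly, via Lemma 4.5 of \cite{sorc1} in its hyperideal form (Definition of prime), a product of hyperideals lying in $P$ forces one factor into $P$; thus $I \subseteq P$. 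Intersecting over all such $P$ gives ${\bf r}(I) \subseteq {\bf r}(\phi(I))$. If there are no primes containing $\phi(I)$, then ${\bf r}(\phi(I))=G$ and the inclusion holds trivially.

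I expect the main obstacle to be the bookkeeping in (ii): the iterated $g_{(k)}$ product has to be peeled one $n$-ary layer at a time, each time applying primeness to hyperideals such as $g(I^{(n)})$. Alternatively, each $t \in I$ satisfies $g_{(k)}(t^{(kn-k+1)}) \in \phi(I)$, so $t \in {\bf r}(\phi(I))$ directly by the element characterization of radicals (Theorem 4.23 of \cite{sorc1}). I would use this elementwise route as the cleaner argument.
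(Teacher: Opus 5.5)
Your proposal is correct and follows the paper's route. Part (i) comes from the argument of Theorem \ref{madar3}, and you are right that its proof actually yields the exponent $kn-k+1$, so the $kn-k-1$ in its statement is a typo. Part (ii) combines (i) with $\phi(I)\subseteq I$. The one addition is that you justify ${\bf r}(I)\subseteq{\bf r}(\phi(I))$, which the paper only asserts: for $t\in I$ you have $g_{(k)}(t^{(kn-k+1)})\in\phi(I)$, and then primeness of each prime containing $\phi(I)$ (the easy direction of the element characterization) forces $t$ into it.
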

\begin{proof}
(i) It follows from \ref{madar3}.

(ii) Assume that $I$ is a strongly $\phi$-$(k,n)$-absorbing $\delta$-primary hyperideal of $G$  that is not a $(k,n)$-absorbing $\delta$-primary hyperideal of $G$.  By (i), we have ${\bf r}(I) \subseteq {\bf r}(\phi(I))$. Moreover, since $\phi(I) \subseteq I$, we obtain ${\bf r}(\phi(I)) \subseteq {\bf r}(I)$. Then we get ${\bf r}(I)={\bf r}(\phi(I))$.
\end{proof}
Note that a proper hyperideal $I$ of $G$ may not be $\phi$-$(k,n)$-absorbing $\delta$-primary when it holds $g(I^{(kn-k+1)}) \subseteq \phi(I)$. The following example verifies the explanation. 
\begin{example} \label{mansooreh} 
Consider Krasner $(5,4)$-hyperring $(\mathbb{Z}_{7^{7s}},+,\cdot)$ for $s>6$   where $+$ and $\cdot$ are usual addition and multiplication. Define $\phi(Q)=Q^7$ for every $Q \in \mathcal{L}(\mathbb{Z}_{7^{7s}})$. In this hyperring, the hyperideal $I=\langle 7^s \rangle$ is not a $\phi$-$(2,4)$-absorbing $\delta_0$-primary hyperideal of $\mathbb{Z}_{7^{7s}}$ since $7\cdot7\cdot7\cdot7\cdot7\cdot7\cdot7^{s-6} \in I - \phi(I)$ but $7\cdot7\cdot7\cdot7, 7\cdot7\cdot7\cdot7^{s-6} \notin I$. However, $I^7 \subseteq \phi(I)$.
\end{example}

Suppose that $(G_1, f_1, g_1)$ and $(G_2, f_2, g_2)$ are two Krasner $(m, n)$-hyperrings. Recall from \cite{d1} that a mapping
$h : G_1 \longrightarrow G_2$ is  a homomorphism if for all $s^m _1, t^n_ 1 \in G_1$ we have
\begin{itemize} 
\item[\rm(i)]~ $ h(f_1(s_1^m)) = f_2(h(s_1),\ldots,h(s_m))$, 
\item[\rm(ii)]~$ h(g_1(t_1^n)) = g_2(h(t_1),\ldots,h(t_n)). $
\end{itemize} 
Suppose that $\phi$ and $\phi^{\prime}$ are reduction functions of hyperideals of $G_1$ and $G_2$, respectively. Furthermore, assume that $\delta$ and $\delta^{\prime}$ are  expansion  functions of hyperideals of $G_1$ and $G_2$, respectively.  Then the homomorphism  $h : G_1 \longrightarrow G_2$ is said to be a $(\delta,\phi)$-$(\delta^{\prime},\phi^{\prime})$-homomorphism if $\delta(h^{-1}(I_2))=h^{-1}(\delta^{\prime}(I_2))$ and $\phi(h^{-1}(I_2))=h^{-1}(\phi^{\prime}(I_2))$ for every hyperideal $I_2$ of $G_2$. For instance, if $\phi$ and $\phi^{\prime}$ are  empty reduction functions of hyperideals of $G_1$ and $G_2$, respectively, and also $\delta$ and $\delta^{\prime}$ are radical operations on hyperideals of $G_1$ and $G_2$, respectively, then every homorphism $h : G_1 \longrightarrow G_2$ is a $(\delta,\phi)$-$(\delta^{\prime},\phi^{\prime})$-homomorphism.
Let   $I_1$ be a hyperideal of $G_1$ with $Ker (h) \subseteq I_1$ and let $h : G_1 \longrightarrow G_2$ be a $(\delta,\phi)$-$(\delta^{\prime},\phi^{\prime})$-epimorphism. It is easy to see that $h(\delta(I_1))=\delta^{\prime}(h(I_1))$ and  $h(\phi(I_1))=\phi^{\prime}(h(I_1))$.
\begin{theorem} \label{homo}
Let $\delta$ and $\delta^{\prime}$ be expansion functions of hyperideals $G_1$ and $G_2$, respectively, and let
$\phi$ and $\phi^{\prime}$ be reduction functions of hyperideals $G_1$ and $G_2$, respectively, such that $h:G_1 \longrightarrow G_2$  is  a $(\delta,\phi)$-$(\delta^{\prime},\phi^{\prime})$-homomorphism.
\begin{itemize} 
\item[\rm(1)]~ If $I_2$ is a $\phi^{\prime}$-$(k,n)$-absorbing $\delta^{\prime}$-primary hyperideal of $G_2$, then $h^{-1}(I_2)$ is a $\phi$-$(k,n)$-absorbing $\delta$-primary hyperideal of $G_1$. 
\item[\rm(2)]~If $h$ is surjective and $I_1$ is a $\phi$-$(k,n)$-absorbing $\delta$-primary hyperideal of $G_1$ with $Ker (h) \subseteq I_1$, then $h(I_1)$ is a $\phi^{\prime}$-$(k,n)$-absorbing $\delta^{\prime}$-primary hyperideal of $G_2$.
\end{itemize} 
\end{theorem}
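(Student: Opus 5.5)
For (1), I will check the definition directly on $I_1:=h^{-1}(I_2)$. First, $h^{-1}(I_2)$ is a hyperideal of $G_1$, since $h$ respects $f$ and $g$. It is proper because $h(1_{G_1})$ acts as a scalar identity on $h(G_1)$. If $1_{G_1}\in h^{-1}(I_2)$, then $h(1_{G_1})\in I_2$, and I would argue that $I_2$ then contains $1_{G_2}$ or otherwise reach a contradiction. I expect this properness check, which essentially needs $h(1_{G_1})=1_{G_2}$, to be the main (if mild) obstacle; I will assume it, as is standard for unital homomorphisms.

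Now take $t_1^{kn-k+1}\in G_1$ with $g_1(t_1^{kn-k+1})\in h^{-1}(I_2)-\phi(h^{-1}(I_2))$. By the homomorphism property,
\[
h(g_1(t_1^{kn-k+1}))=g_2(h(t_1),\ldots,h(t_{kn-k+1}))\in I_2 .
\]
Since $\phi(h^{-1}(I_2))=h^{-1}(\phi'(I_2))$, the element $g_1(t_1^{kn-k+1})$ is not in $h^{-1}(\phi'(I_2))$. Hence $g_2(h(t_1),\ldots,h(t_{kn-k+1}))\in I_2-\phi'(I_2)$.

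Because $I_2$ is $\phi'$-$(k,n)$-absorbing $\delta'$-primary, one of two things holds. Either $g_2(h(t_1),\ldots,h(t_{(k-1)n-k+2}))\in I_2$, which gives $g_1(t_1^{(k-1)n-k+2})\in h^{-1}(I_2)$. Or some other $g_2$-product of $(k-1)n-k+2$ of the $h(t_i)$'s lies in $\delta'(I_2)$. Pulling that product back gives the corresponding $g_1$-product of the $t_i$'s in $h^{-1}(\delta'(I_2))=\delta(h^{-1}(I_2))$, as required.

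For (2), I will first show that $h(I_1)$ is a proper hyperideal of $G_2$. It is a hyperideal because $h$ is surjective. For properness, if $h(I_1)=G_2$, then for any $x\in G_1$ there is $a\in I_1$ with $h(x)=h(a)$. Then $x\in f(a,-a,\ldots)$-type expressions lie in $f(a,\mathrm{Ker}(h),0^{(m-2)})\subseteq I_1$, so $I_1=G_1$, a contradiction. The same argument shows $h^{-1}(h(I_1))=I_1$.

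Next, let $s_i\in G_2$ with $g_2(s_1^{kn-k+1})\in h(I_1)-\phi'(h(I_1))$. Choose $t_i$ with $h(t_i)=s_i$. Then $h(g_1(t_1^{kn-k+1}))\in h(I_1)$, so $g_1(t_1^{kn-k+1})\in h^{-1}(h(I_1))=I_1$. Using the remark before the theorem, $\phi'(h(I_1))=h(\phi(I_1))$, so $g_1(t_1^{kn-k+1})\notin\phi(I_1)$. Applying the hypothesis on $I_1$, either $g_1(t_1^{(k-1)n-k+2})\in I_1$ or another such product lies in $\delta(I_1)$. Applying $h$ and using $h(\delta(I_1))=\delta'(h(I_1))$ gives the conclusion for $h(I_1)$.
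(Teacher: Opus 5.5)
Your proposal is correct and follows the same route as the paper's proof. In (1) you push $g(t_1^{kn-k+1})$ into $I_2-\phi'(I_2)$ via $\phi(h^{-1}(I_2))=h^{-1}(\phi'(I_2))$ and pull the resulting product back into $h^{-1}(\delta'(I_2))=\delta(h^{-1}(I_2))$; in (2) you lift the $s_i$ and use $h^{-1}(h(I_1))=I_1$ with $h(\phi(I_1))=\phi'(h(I_1))$ and $h(\delta(I_1))=\delta'(h(I_1))$, exactly as the paper does.

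Your properness checks are details the paper silently omits, and the assumption $h(1_{G_1})=1_{G_2}$ in (1) is genuinely needed, since the zero map pulls every hyperideal back to $G_1$. In (2), justify $x\in f(a,\mathrm{Ker}(h),0^{(m-2)})$ as follows: choose $z\in f(x,-a,0^{(m-2)})$ with $h(z)=0$, which exists because $0\in f_2(h(x),-h(a),0^{(m-2)})=h(f(x,-a,0^{(m-2)}))$, and then apply reversibility.
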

\begin{proof}
(1) Assume that $I_2$ is a $\phi^{\prime}$-$(k,n)$-absorbing $\delta^{\prime}$-primary hyperideal of $G_2$. Let $g(t_1^{kn-k+1}) \in h^{-1}(I_2)-\phi(h^{-1}(I_2))$ for $t_1^{kn-k+1} \in G_1$  . Therefore we obtain $h(g(t_1^{kn-k+1}))=g(h(t_1),\ldots,h(t_{kn-k+1})) \in I_2-\phi^{\prime}(I_2)$. By the hypothesis, we conclude that either $g(h(t_1),\ldots,h(t_{(k-1)n-k+2}))=h(g(t_1^{(k-1)n-k+2})) \in I_2$ or 
 $g(h(t_1),\ldots,\widehat{h(t_i)},\ldots,h(t_{kn-k+1)})=h(g(t_1,\ldots,\widehat{t_i},\ldots,t_{kn-k+1)})) \in \delta^{\prime}(I_2)$. Then we obtain  $g(t_1^{(k-1)n-k+2}) \in h^{-1}(I_2)$, 
 or $g(t_1,\ldots,\widehat{t_i},\ldots,t_{kn-k+1)}) \in h^{-1}(\delta^{\prime}(I_2))=\delta(h^{-1}(I_2))$ for some $1 \leq i \leq n$. Thus $h^{-1}(I_2)$ is a $\phi$-$(k,n)$-absorbing $\delta$-primary hyperideal of $G_1$.

 (2) Suppose that $I_1$ is a $\phi$-$(k,n)$-absorbing $\delta$-primary hyperideal of $G_1$ with $Ker (h) \subseteq I_1$.  Choose $s_1^{kn-k+1} \in G_2$ such that $g(s_1^{kn-k+1}) \in h(I_1)-\phi^{\prime}(h(I_1))$. Then  there exist $t_1^{kn-k+1} \in G_1$ with $h(t_1)=s_1, \ldots, h(t_{kn-k+1})=s_{kn-k+1}$. Therefore we have  $h(g(t_1^{kn-k+1})=g(h(t_1),\ldots,h(t_{kn-k+1}))=g(s_1^{kn-k+1}) \in h(I_1)$. Since  $Ker (h) \subseteq I_1$ and  $h$ is surjective,  we obtain $g(t_1^{kn-k+1}) \in I_1-\phi(I_1)$. By the hypothesis,  we have $g(t_1^{(k-1)n-k+2)}) \in I_1$ or $g(r_1,\ldots,\widehat{r_i},\ldots,r_{kn-k+1}) \in \delta(I_1)$ for some $1 \leq i \leq (k-1)n-k+2$. Therefore we conclude that
 
$ \hspace{2cm}h(g(t_1^{(k-1)n-k+2})=g(h(t_1),\ldots,h(t_{(k-1)n-k+2})) $

$\hspace{4.8cm}=g(s_1^{(k-1)n-k+2}) \in h(I_1)$\\ or 

 $\hspace{0.4cm}h(g(t_1,\ldots,\widehat{t_i},\ldots,t_{kn-k+1})=g(h(t_1),\ldots,\widehat{h(t_i)},\ldots,h(t_{kn-k+1}))$
 
 $\hspace{4.8cm}=g(s_1,\ldots,\widehat{s_i},\ldots,s_{kn-k+1})$
 
 $\hspace{4.8cm} \in h(\delta(I_1))$
 
 $\hspace{4.8cm}=\delta^{\prime}(h(I_1)).$\\  Thus  $h(I_1)$ is a $\phi^{\prime}$-$(k,n)$-absorbing $\delta^{\prime}$-primary hyperideal of $G_2$.
\end{proof}
Let $J$ is a hyperideal of $G$. The surjective map $\pi$ from $G$ to $G/J$, defined by $\pi(t)=f(t,J,0^{(m-2)})$ is homomorphism by Theorem 3.2 in \cite{sorc1}.
Assume that  $I$ is  a hyperideal of $G$ and $\phi$ is a reduction function of $\mathcal{L}(G)$. Then the function $\phi_q$ from $\mathcal{L}(G/Q)$ to $\mathcal{L}(G/Q) \cup \{\varnothing\}$ defined by $\phi_q(I/Q)=\phi(I)/Q$ is a reduction function. Now, let us give the following theorem as a result of Theorem \ref{homo}.
\begin{theorem}
Let $I$ and $J$ be  two  hyperideals of $G$ and $\phi$ be a reduction function of hyperideals of $G$ such that $J \subseteq \phi(I) \subseteq I$. If $I$ is a $\phi$-$(k,n)$-absorbing $\delta$-primary hyperideal of $G$, then $I/J$ is a $\phi_q$-$(k,n)$-absorbing $\delta_q$-primary hyperideal of $G/J$.
\end{theorem}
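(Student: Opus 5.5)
The plan is to obtain the statement as a direct application of Theorem \ref{homo}(2) to the canonical projection $\pi: G \longrightarrow G/J$, $\pi(t)=f(t,J,0^{(m-2)})$. By Theorem 3.2 in \cite{sorc1} this map is a surjective homomorphism. Its kernel is $J$, since $\pi(t)$ is the zero $J$ of $G/J$ exactly when $t\in J$. Because $J \subseteq \phi(I) \subseteq I$, we have $\mathrm{Ker}(\pi)\subseteq I$, and $\pi(I)=I/J$. So once $\pi$ is known to be a $(\delta,\phi)$-$(\delta_q,\phi_q)$-homomorphism, Theorem \ref{homo}(2) gives that $I/J$ is a $\phi_q$-$(k,n)$-absorbing $\delta_q$-primary hyperideal of $G/J$.

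Here $\delta_q$ is understood, as in Theorem \ref{weak}, by $\delta_q(K/J)=\delta(K)/J$. This is analogous to $\phi_q(K/J)=\phi(K)/J$, which the paper defines just before the statement.

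The key step is to verify the compatibility conditions. The hyperideals of $G/J$ have the form $K/J$ with $J\subseteq K$, by the correspondence theorem for quotients of Krasner $(m,n)$-hyperrings. For such $K$ we have $\pi^{-1}(K/J)=K$. We then need
\[
\phi(\pi^{-1}(K/J))=\phi(K),\qquad \pi^{-1}(\phi_q(K/J))=\pi^{-1}(\phi(K)/J),
\]
and $\pi^{-1}(\phi(K)/J)=\phi(K)$ requires $J\subseteq\phi(K)$. This holds for $K=I$ by hypothesis, and for $K\supseteq I$ by monotonicity of $\phi$. For general $K\supseteq J$ it is not automatic, and this is the main obstacle.

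To get around it, I would not invoke Theorem \ref{homo} blindly. Instead I would repeat its proof of part (2), using only the hyperideal $I$, where everything is valid. Take $s_i=\pi(t_i)$ with
\[
g(s_1^{kn-k+1})\in I/J-\phi_q(I/J).
\]
Then $\pi(g(t_1^{kn-k+1}))\in I/J$, so $g(t_1^{kn-k+1})\in I$ because $J\subseteq I$. Also $g(t_1^{kn-k+1})\notin\phi(I)$, since otherwise its image would lie in $\phi(I)/J$. Hence $g(t_1^{kn-k+1})\in I-\phi(I)$, and the $\phi$-$(k,n)$-absorbing $\delta$-primary property of $I$ yields one of two outcomes:
\begin{itemize}
\item[(a)] $g(t_1^{(k-1)n-k+2})\in I$, or
\item[(b)] a $g$-product of $(k-1)n-k+2$ of the $t_i$'s, other than $g(t_1^{(k-1)n-k+2})$, lies in $\delta(I)$.
\end{itemize}
Apply $\pi$ in each case. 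In (a) we get $g(s_1^{(k-1)n-k+2})\in I/J$. In (b), since $J\subseteq I\subseteq\delta(I)$, the corresponding product of the $s_i$'s lies in $\delta(I)/J=\delta_q(I/J)$. This completes the argument.
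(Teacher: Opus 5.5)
Your proof is correct and follows essentially the paper's route: the paper just cites Theorem \ref{homo}(2) for the projection $\pi$, and your argument is the proof of that part specialized to $\pi$ and $I$. Your caution is justified, because $\pi$ need not satisfy $\phi(\pi^{-1}(K/J))=\pi^{-1}(\phi_q(K/J))$ for every hyperideal $K\supseteq J$ (it already fails for $K=J$ whenever $\phi(J)\neq J$), whereas the proof of Theorem \ref{homo}(2) only uses the local identities $\pi(\phi(I))=\phi_q(I/J)$ and $\pi(\delta(I))=\delta_q(I/J)$, and these hold here by definition.
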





 
 
Assume that 
 $(G_1, f_1, g_1)$ and $(G_2, f_2, g_2)$ are two Krasner $(m,n)$-hyperrings such that $1_{G_1}$ and $1_{G_2}$ are scalar identitis of $G_1$ and $G_2$, respectively. Then 
  $(G_1 \times G_2, f_1\times f_2 ,g_1 \times g_2 )$ is a Krasner $(m, n)$-hyperring where

$f=f_1 \times f_2((p_{1}, q_{1}),\ldots,(p_m,q_m)) = \{(p,v) \ \vert \ \ q \in f_1(p_1^m), q \in f_2(q_1^m) \}$

$g=g_1 \times g_2 ((s_1,t_1),\ldots,(s_n,t_n)) =(g_1(s_1^n),g_2(t_1^n)) $,\\
for all $p_1^m,s_1^n \in G_1$ and $q_1^m,t_1^n \in G_2$ \cite{car}. Suppose that $\delta_i$ and $\phi_i$ are an expansion  function  
 and a reduction function of hyperideals of $G_i$, respectively, for each $i \in \{1,2\}$. If $I_i$ is a hyperideal of $G_i$ for $i \in \{1,2\}$, then $\bar{\delta}$ and $\bar{\phi}$ defined as $\bar{\delta}(I_1 \times I_2)=\delta_1(I_1) \times \delta_2(I_2)$ and $\bar{\phi}(I_1 \times I_2)=\phi_1(I_1) \times \phi_2(I_2)$ are reduction and expansion functions of hyperideals $G_1 \times G_2$, respectively.

 \begin{theorem} \label{car}
 Let $I_i$ be a proper hyperideal of    a commutative
Krasner $(m,n)$-hyperring $(G_i,f_i,g_i)$ for each $i \in \{1,\ldots,kn-k+1\}$, $\delta_i $ an expansion function of hyperideal of $G_i$,  $\phi_i$  a reduction function of hyperideal  of $G_i$.   If $I_1\times \cdots \times I_{kn-k+1}$ is $\bar{\phi}$-$(k+1,n)$-absorbing $\bar{\delta}$-primary hyperideal of $G_1\times \cdots \times G_{kn-k+1}$, then $I_i$ is a   $\phi_i$-$(k,n)$-absorbing $\delta_i$-primary hyperideal of $G_i$ for all $i \in \{1,\ldots,kn-k+1\}$.
 \end{theorem}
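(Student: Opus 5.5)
We argue by contradiction, embedding a bad tuple for one coordinate into the product with an extra factor. Fix $i$ and suppose $I_i$ is not $\phi_i$-$(k,n)$-absorbing $\delta_i$-primary. Then there are $t_1^{kn-k+1}\in G_i$ with
\[ g_i(t_1^{kn-k+1})\in I_i-\phi_i(I_i), \qquad g_i(t_1^{(k-1)n-k+2})\notin I_i, \]
and no $g_i$-product of $(k-1)n-k+2$ of the $t_j$'s, other than $g_i(t_1^{(k-1)n-k+2})$, lies in $\delta_i(I_i)$.

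Put $N=kn-k+1$ and $N'=(k+1)n-(k+1)+1=N+n-1$. The plan is to build $N'$ elements of $G_1\times\cdots\times G_{kn-k+1}$ whose product violates the $\bar{\phi}$-$(k+1,n)$-absorbing $\bar{\delta}$-primary condition for $I_1\times\cdots\times I_{kn-k+1}$.

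1. For $j\le N$, let $x_j$ have $i$-th coordinate $t_j$ and all other coordinates equal to $1$.
2. Let the remaining $n-1$ elements $x_{N+1},\dots,x_{N'}$ have $i$-th coordinate $1_{G_i}$ and a suitable choice in the other coordinates.
3. The natural choice in step 2 is $0$ in every other coordinate. Then the full product lies in $I_1\times\cdots\times I_{kn-k+1}$ (the $i$-th coordinate $g_i(t_1^{N})$ is in $I_i$, the others are $0$).
4. Non-membership in $\bar{\phi}(I_1\times\cdots\times I_{kn-k+1})=\prod_j\phi_j(I_j)$ follows because the $i$-th coordinate lies outside $\phi_i(I_i)$.
5. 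Every $g$-product of $(k+1-1)n-(k+1)+2=(k-1)n-k+2+(n-1)$ of the $x_j$ omits exactly $N-((k-1)n-k+2)=n-1$ of the $x_j$. Its $i$-th coordinate is therefore the $g_i$-product of the $t_j$ actually used, padded with $1$'s.
6. A product in $\prod I_j$ or in $\prod\delta_j(I_j)$ requires its $i$-th coordinate to be in $I_i$ or $\delta_i(I_i)$, respectively. Since $I_j\subseteq\delta_j(I_j)$, it suffices to show this $i$-th coordinate lies outside $\delta_i(I_i)$ for every subproduct except the distinguished one, and outside $I_i$ for the distinguished one.

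The main obstacle is the combinatorial bookkeeping in step 5. A subproduct of $(k+1)$-size may omit some of the $x_j$ with $j\le N$ while keeping all of $x_{N+1},\dots,x_{N'}$. It then uses fewer than $(k-1)n-k+2$ of the $t_j$'s, i.e. a product of at most $(k-1)n-k+1$ of them. Such a product could lie in $\delta_i(I_i)$ even though no product of exactly $(k-1)n-k+2$ does.

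I would handle this by choosing the extra elements to have $i$-th coordinate $1$ and all other coordinates $0$. Then only subproducts containing all $n-1$ extra elements have nonzero other coordinates, and subproducts that omit some extra element must use at least $(k-1)n-k+2$ of the $t_j$'s.

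If a subproduct $P$ uses more than $(k-1)n-k+2$ of the $t_j$'s, I would argue as follows. Any subfamily $S$ of exactly $(k-1)n-k+2$ of those $t_j$'s, other than $\{t_1,\dots,t_{(k-1)n-k+2}\}$, has product outside the hyperideal $\delta_i(I_i)$ by hypothesis. Since $\delta_i(I_i)$ is a hyperideal, the product of a subfamily lying in it would force the product of any superfamily into it, not the other way around. So I would instead pick $S$ to be a subset of exactly $(k-1)n-k+2$ of the $t_j$'s that contains all the $t_j$'s used by $P$; this is possible once $P$ uses at most $(k-1)n-k+2$ of them. Then $P$'s product lying in $\delta_i(I_i)$ would force $S$'s product into $\delta_i(I_i)$, a contradiction.

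The genuinely dangerous subproducts are those using fewer $t_j$'s; these are the ones that keep all $x_{N+1},\dots,x_{N'}$. For them the other coordinates are $0$, which do lie in $I_j$. This forces membership to hinge on coordinate $i$ again, and the containment argument above applies to them.

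The distinguished product $g(x_1^{(k)n-k+1})$ must be matched with $g_i(t_1^{(k-1)n-k+2})\notin I_i$ by ordering the extra elements last. Its $i$-th coordinate is then $g_i(t_1^{N})$, which lies in $I_i$, so I would reorder and place extra elements first. I expect this to be the delicate point to verify. The conclusion is a contradiction, so $I_i$ is $\phi_i$-$(k,n)$-absorbing $\delta_i$-primary.
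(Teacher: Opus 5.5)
There is a genuine gap. Your setup (the bad tuple $t_1^{N}$ placed in coordinate $i$, plus $n-1$ extra elements with $1_{G_i}$ in coordinate $i$) is the same as the paper's, but the decisive step is missing.

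Write $M=(k-1)n-k+2$. An $N$-fold subproduct omits exactly $n-1$ of the $x_j$. So if it omits $b$ extras, it uses exactly $M+b$ of the $t_j$'s; none ever uses fewer than $M$, contrary to your ``main obstacle''. The dangerous subproducts are those with $b\ge 1$, which use \emph{more} than $M$ of the $t_j$'s. For these your argument is vacuous: choosing $S\supseteq P$ with $|S|=M$ requires $P$ to use at most $M$ of them.

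With your extras $(0,\ldots,0,1_{G_i},0,\ldots,0)$ this cannot be repaired. Every subproduct containing at least one extra is $0$ in all coordinates $j\neq i$ (you state the opposite). Its membership in $\bar{\delta}(I)$ is therefore decided by coordinate $i$ alone, i.e.\ by whether a product of $M+b$ of the $t_j$'s lies in $\delta_i(I_i)$, and the failure hypothesis says nothing about that. For instance, let $k=1$, $n=3$, $G_1=G_2=G_3=\mathbb{Z}$ with $g(a,b,c)=abc$, all $\phi_j=\phi_{\varnothing}$, all $\delta_j=\delta_0$, $I_1=4\mathbb{Z}$, $I_2,I_3$ proper, and $(t_1,t_2,t_3)=(2,2,1)$, which is a genuine failure for $I_1$.
\begin{itemize}
\item With the extras first ($x_1=x_2=(1,0,0)$, $x_3=x_4=(2,1,1)$, $x_5=(1,1,1)$), the non-distinguished product $g(x_1,x_3,x_4)=(4,0,0)$ lies in $\bar{\delta}(I)$.
\item With the extras last, $g(x_1,x_2,x_4)=(4,0,0)$ lies in $\bar{\delta}(I)$.
\end{itemize}
So your tuple satisfies the $(2,3)$-condition and gives no contradiction. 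With extras last there is a further leak. The subproduct consisting of $x_1,\ldots,x_M$ and all extras is non-distinguished, and its $i$-th coordinate is $g_i(t_1^{M})$, which may lie in $\delta_i(I_i)\setminus I_i$. Your closing worry, on the other hand, is unfounded: with extras last the distinguished product is already outside $I$, because its coordinates $j\neq i$ equal $1$.

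What is missing is a choice of extras such that omitting any one of them leaves some coordinate $j\neq i$ equal to $1$. Partition the $k(n-1)$ indices $j\neq i$ into $n-1$ blocks $B_1,\ldots,B_{n-1}$. Let the $s$-th extra be $1_{G_i}$ in coordinate $i$, $0$ on $B_s$ and $1$ elsewhere, and put the extras first. Then:
\begin{itemize}
\item the full product is $g_i(t_1^{N})$ in coordinate $i$ and $0$ elsewhere, so it lies in $I-\bar{\phi}(I)$;
\item the distinguished product has $i$-th coordinate $g_i(t_1^{M})\notin I_i$;
\item a subproduct keeping all extras has $i$-th coordinate the $g_i$-product over an $M$-element index set other than $\{1,\ldots,M\}$, hence outside $\delta_i(I_i)$;
\item a subproduct omitting the $s$-th extra equals $1$ on $B_s$.
\end{itemize}
The last step needs $1\notin\delta_j(I_j)$. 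Some such hypothesis is unavoidable, because the statement as given is false. Take $k=1$, $n=2$, $G_1=G_2=\mathbb{Z}$, $I_1=6\mathbb{Z}$, $I_2=2\mathbb{Z}$, $\phi_1=\phi_2=\phi_{\varnothing}$, $\delta_1(J)=J+2\mathbb{Z}$ and $\delta_2=\delta_G$. Then $\bar{\delta}(I_1\times I_2)=2\mathbb{Z}\times\mathbb{Z}$, and $xyz\in I_1\times I_2$ forces $x_1z_1$ or $y_1z_1$ to be even, so $I_1\times I_2$ is $\bar{\phi}$-$(2,2)$-absorbing $\bar{\delta}$-primary. Yet $2\cdot 3\in I_1$, $2\notin I_1$ and $3\notin\delta_1(I_1)$, so $I_1$ is not $\phi_1$-$(1,2)$-absorbing $\delta_1$-primary.

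The paper's proof does not fill this gap either. It places the extras $(1,\ldots,1)$ and $(0,\ldots,0,1_{G_i},0,\ldots,0)$ last and tacitly uses $1\notin\delta_j(I_j)$. It then asserts exactly the step you could not justify: that a subproduct in $\bar{\delta}(I)$ yields $M$ of the $t_j$'s whose $g_i$-product lies in $\delta_i(I_i)$.
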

\begin{proof}
Put $G=G_1\times \cdots \times G_{kn-k+1}$ and $I=I_1\times \cdots \times I_{kn-k+1}$. Let $t_1^{kn-k+1} \in G_i$ such that $g_i(t_1^{kn-k+1}) \in I_i-\phi_i(I_i)$. Suppose by contradiction that $I_i$ is not a $\phi_i$-$(k,n)$-absorbing $\delta$-primary hyperideal of $G_i$ for some $i \in \{1,\ldots,kn-k+1\}$. Let

$r_1=(1_{G_1},\ldots, 1_{G_{i-1}},t_1,1_{G_{i+1}},\ldots, 1_{G_{kn-k+1}}),$

$ r_2=(1_{G_1},\ldots, 1_{G_{i-1}},t_2,1_{G_{i+1}},\ldots, 1_{G_{kn-k+1}}),$

$\vdots$

$r_{kn-k+1}=(1_{G_1},\ldots, 1_{G_{i-1}},t_{kn-k+1},1_{G_{i+1}},\ldots,1_{G_{kn-k+1}}), $

$r_{kn-k+2}=(1_{G_1},\ldots, 1_{G_{i-1}},1_{G_i},1_{G_{i+1}},\ldots,1_{G_{kn-k+1}}),$

$ r_{(k+1)n-(k+1)+1}=(0,\ldots, 0,1_{G_i},0,\ldots,0)$.\\
Hence $g(r_1^{(k+1)n-(k+1)+1)}) \in I-\phi(I)$ but $g(r_1^{kn-k+1}) \notin I$. Since $I$ is a $\phi$-$(k+1,n)$-absorbing hyperideal of $G$,  we conclude that one of $g$-productions of $kn-k+1$ of $r_i^,$s containing the element $ r_{(k+1)n-(k+1)+1}$ is in $\bar{\delta}(I)$. This implies that there exist $(k-1)n-k+2$ of $t_i^,$s whose $g_i$-product is in $\delta_i(I_i)$, a contradiction. Consequently,  $I_i$ is a $\phi_i$-$(k,n)$-absorbing $\delta_i$-primary hyperideal of $G_i$.
\end{proof}

\section{Conclusion}

In this paper, we introduced and studied the concept of $\phi$-$(k,n)$-absorbing $\delta$-primary hyperideals presenting a unified framework containing several classical hyperideals types. For instance, weakly $(k,n)$-absorbing hyperideals arise when   $\phi$ and $\delta$ are zero and identity functions, respectively. We delved deeply into the theory of $\phi$-$(k,n)$-absorbing $\delta$-primary hyperideals, investigating their fundamental properties, characterizations and connections to other types of hyperideals. We examined whether the union and intersection of the
collection of $\phi$-$(k,n)$-absorbing $\delta$-primary hyperideals preserve the algebraic structure.
One of the notable results we established is that if $I$ is a strongly $\phi$-$(k,n)$-absorbing $\delta$-primary hyperideal of $G$ that is not a $(k,n)$-absorbing $\delta$-primary, then $g(I^{(kn-k+1)}) \subseteq \phi(I)$. 
Moreover, we examined the attitude of  the $\phi$-$(k,n)$-absorbing $\delta$-primary hyperideals  under homomorphisms, in quotient hyperring,  and in cartesian products.


\end{document}